\newtheorem{thm}{Theorem}[section]
\newtheorem{lem}[thm]{Lemma}
\newtheorem{prop}[thm]{Proposition}
\newtheorem{defn}[thm]{Definition}
\newtheorem{rem}[thm]{Remark}
\renewenvironment{proof}{{\bfseries Proof.}}{\hfill\rule{0.5em}{0.5em}}
\newenvironment{pot}[1]{{\bfseries Proof of Theorem {\ref{#1}}.}}{\hfill\rule{0.5em}{0.5em}}
\def\dashint{\,\ThisStyle{\ensurestackMath{\stackinset{c}{.2\LMpt}{c}{.5\LMpt}{\SavedStyle-}{\SavedStyle\phantom{\int}}}\setbox0=\hbox{$\SavedStyle\int\,$}\kern-\wd0}\int}
\newcommand{\ip}[2]{\left\langle #1,#2 \right\rangle} 
\newcommand{\tb}{\dashint}
\newcommand{\tp}{\displaystyle\int}
\newcommand{\su}{\mathop{\sup} \limits}
\newcommand{\osc}[2]{\mathop {\operatorname{osc}}\limits_{#1} #2}
\newcommand{\A}{\mathbf A}
\newcommand{\N}{\mathbb N}
\newcommand{\R}{\mathbb R}
\newcommand{\va}{\varphi}
\newcommand{\Om}{\Omega}
\newcommand{\di}{\operatorname{div}}
\newcommand{\ep}{\epsilon}
\newcommand{\f}{\frac}
\begin{document}

\title[Up-to-boundary pointwise gradient estimates]{Up-to-boundary pointwise gradient estimates for very singular quasilinear elliptic equations with mixed data}

\author[T. D. Do]{Tan Duc Do}
\address{Tan Duc Do \\
	University of Economics Ho Chi Minh City}
\email{tanducdo.math@gmail.com}

\author[L. X. Truong]{Le Xuan Truong}
\address{Le Xuan Truong \\
	University of Economics Ho Chi Minh City}
\email{lxuantruong@ueh.edu.vn}

\author[N. N. Trong]{Nguyen Ngoc Trong$^*$}\thanks{$^*$Corresponding author}
\address{Nguyen Ngoc Trong \\
	University of Economics Ho Chi Minh City}
\email{trongnn37@gmail.com}

\subjclass[2010]{primary: 35J60, 35J61, 35J62; secondary: 35J75, 42B37}
\keywords{Very singular, Quasilinear elliptic equation,  Mixed data,  Pointwise gradient estimate, Up to boundary, Reifenberg domain.}

\arraycolsep=1pt

\begin{abstract}
This paper establishes pointwise estimates up to boundary for the gradient of weak solutions to
a class of very singular quasilinear elliptic equations with
mixed data
\begin{equation*}
\begin{cases}
-\operatorname{div}\left(\A(x,D u)\right)=g-\di f \quad & \mathrm{in} \quad \Om \\ 
u= 0 \quad & \text{on} \ \partial \Omega, 
\end{cases}
\end{equation*}
where $\Omega \subset \mathbb{R}^n$ is sufficiently flat in the sense of Reifenberg.
\end{abstract}

\maketitle

\tableofcontents

\section{Introduction}
Regularity for solutions to $p$-Laplacian equations is classical, yet in itself is an interesting topic. 
Generally speaking, the theory is up to date well-known with various results fully established.
This naturally sparks extensions to more general settings of $p$-Laplacian type equations which have now become an active area of research. 
For a partial overview on its mainstream development, one may take into account pioneering works such as \cite{B83}, \cite{Evan82}, \cite{Iwa83} , \cite{LU68}, \cite{Lewis83}, \cite{Lieberman84}, \cite{Lieberman86}, \cite{Lieberman88}, \cite{To84}, \cite{U77}, \cite{U68}, \cite{N1}, \cite{NP1} and \cite{NP5} (see also their references therein). 
Specifically, these papers discussed in details the interior $C^{1,\alpha}$ regularity for weak solutions to $p$-Laplacian equations with different types of data and to homogeneous quasilinear elliptic equations of the form $-\di(\A(x,u,Du)) = 0$.

In this paper, we pay particular attention to quasilinear elliptic equations formulated in a general form
\[
\di(\A(x,Du))=G,
\]
where $G$ may take on either divergence form, non-divergence form or even a measure value.
Some regularity results for solutions to such equations are readily available.
Namely, interior and global regularities are known for $\di(\A(x,Du))=\di(|F|^{p-2}F)$ with zero
Dirichlet boundary data.
These are due to the seminal works of Byun et al. in \cite{Byun13}, \cite{Byun04}, \cite{Byun07} as well as those of Mengesha et al and Nguyen in \cite{MP11}, \cite{MP12},\cite{N2} in the setting of (weighted or unweighted) Lebesgue and Sobolev spaces. 
When the divergence form $\di(\A(x,Du))=\di(F)$ is considered, interior $W^{1,q}$ estimates were obtained by Nguyen et al. in \cite{NT16}.
Note that these works land themselves in the expected setting of either the ellipticity condition of $\A$ or a requirement of small $BMO$ oscillation in $x$.
Besides, the underlying geometric framework involves, in its most general sense, Reifenberg flat domains $\Omega$.
Analogous results still remain when those settings are weaken or vary in certain manners.
For example, one can take into account \cite{Phuc14} for a weaker hypothesis on $\Omega$ in which a $p$-capacity uniformly thickness condition is used and for solutions belonging to Lorentz spaces. 
Recently \cite{FN} and \cite{Nguyen} studied the global gradient estimates in weighted Morrey spaces, which generalize \cite{NT16}. 
Other results under different assumptions on $\Omega$, the nonlinearity $\A$ and the boundary data can be found in \cite{AP15}, \cite{BCDK17}, \cite{BPS18}, \cite{CM14}, \cite{GW13}, \cite{KZ99},\cite{NP5}.
When a measure-valued $G$ is considered, interior pointwise gradient estimates via Riesz and Wolff potentials were obtained in \cite{Duzamin2} and \cite{Duzamin3} for growth exponents in the ranges $(2-\frac{1}{n}, \infty)$ and $(2,\infty)$ respectively.
Beside quasilinear equations, there is also a growing interest in system settings which are much more involved.
We refer the readers to \cite{DM}, \cite{DHM}, \cite{DHM2}, \cite{FN}, \cite{KM}, \cite{DTT}, \cite{NP1}, \cite{NP3}, \cite{NP4}, \cite{NP5}, \cite{N1} and \cite{N2}  and their references therein for further exploration in this direction.

The aforegoing discussion leads us to consider a gradient estimate up to boundary for the solutions to a very singular quasilinear elliptic equation with mixed data.
More precisely, let $n \in \{2, 3, 4, \ldots\}$ and $\Omega$ be a bounded open subset of $\mathbb{R}^n$.
Let $p \in (1, n)$.
Consider 
\begin{equation}\label{main}
\begin{cases}
-\operatorname{div}(\A(x,D u))=g-\di f \quad & \mathrm{in} \quad \Om, \\ 
u= 0 \quad & \text{on} \ \partial \Omega, 
\end{cases}
\end{equation}
in which 
$$
g\in L^{\frac{p}{p-1}}(\Omega,\R) 
\quad \mbox{and} \quad  
f\in L^{\frac{p}{p-1}}(\Omega,\R^n).
$$ 
The nonlinearity  
$$
\A:\mathbb{R}^n\times \mathbb{R}^n\to \mathbb{R}^n
$$ 
is a Caratheodory function in the sense that $\A(x, z)$ is measurable in $x$ for every $z$ and is continuous in $z$ for a.e.\ $x$. 
Moreover, $\A(x, z)$ is differentiable in $z$ away from the origin for a.e.\ $x$.   
The following {\it growth, ellipticity} and  {\it continuity assumptions} are also imposed on $\A$.
Suppose there exists a $\Lambda \geq 1$ such that
\begin{eqnarray}
&& | \A(x,z)|\le \Lambda \, |z|^{p-1} 
\quad \mbox{and} \quad | D_2 \A(x,z)|\le \Lambda \, |z|^{p-2},
\label{condi1}
\\
&& \langle D_2 \A(x,z) \, \eta,\eta\rangle\geq \Lambda^{-1}  \, |z|^{p-2} \, |\eta|^2,
\label{condi2}
\\
&& |D_2 \A(x,z)-D_2 \A(x,\eta)|\leq  
\left\{
\begin{array}{ll}
\Lambda \, |z|^{p-2} \,  |\eta|^{p-2} \, (|z|^2+|\eta|^2)^{(2-p-\alpha)/2} \, |z-\eta|^\alpha & \mbox{  if } p < 2,
\\
\Lambda \, (|z|^2+|\eta|^2)^{(p-2-\alpha)/2} \, |z-\eta|^\alpha & \mbox{  if } p \ge 2
\end{array}
\right.
\label{condi3}
\\
&& 
\mbox{for some } \alpha\in (0, |2-p|) \mbox{ if } p \ne 2 \mbox{ and } \alpha \in (0,1] \mbox{ if } p=2,
\nonumber
\\
&& |\A(x,z)-\A(x_0,z)|\leq \Lambda \, \omega(|x-x_0|) \, |z|^{p-1}
\label{Dini}
\end{eqnarray}
for all $x, x_0 \in \mathbb{R}^n$ and for all $(z,\eta)\in \mathbb{R}^n\times \mathbb{R}^n\setminus\{(0,0)\}$, where $D_2\A(x,z)$ denotes the Jacobian matrix of $\A$ with respect to the second variable $z\in \R^n\setminus \{0\}.$

As usual the function $\omega: [0,\infty) \longrightarrow [0,1]$ in \eqref{Dini} is required to be non-decreasing and satisfies 
$$
\lim_{r\downarrow 0} \omega(r)=\omega(0)=0
$$ 
as well as the Dini's condition
\begin{equation} \label{Dini int}
\int_{0}^{1}\omega(r)^{\tau_0} \frac{dr}{r}:=Z < \infty,
\end{equation}
where $\tau_0 := \frac{2}{p} \wedge 1$.

\emph{The above conditions on $n$, $p$, $\Omega$, $\A$, $f$ and $g$ (as well as the parameters therein) serve as the background of our paper and will be imposed without being mentioned in all statements throughout.
Further assumptions if required will be explicitly articulated in the corresponding statements.}

Certain nice consequences can be drawn from the above conditions.
The first inequality in \eqref{condi1} and the Caratheodory property together yield 
$$
\A(x,0) = 0 \quad \mbox{for a.e. } x \in \R^n.
$$ 
Meanwhile \eqref{condi2} implies the strict monotonicity condition
\begin{equation} \label{mono}
\ip{\A(x,z)-\A(x,\eta)}{z-\eta}\geq C(n,p,\Lambda) \, \Phi(z,\eta)
\end{equation}
for all $(z,\eta) \in \R^n \times \R^n \setminus \{(0,0)\}$ and for a.e.\ $x \in \R^n$, where $\Phi: \mathbb{R}^n\times \mathbb{R}^n\to \mathbb{R}$ is defined by
\[
\Phi(z,\eta):=
\left\{
\begin{array}{ll}
\left(|z|^2+|\eta|^2\right)^{\frac{p-2}{2}}|z-\eta|^2 & \mbox{if } p \le 2,
\\
|z-\eta|^p & \mbox{if } p > 2
\end{array}
\right.
\]
due to \cite[Lemma 1]{To84}.

A typical model for \eqref{main} is obviously given by the $p$-Laplace equation with mixed data
\[
-\Delta_p \, u:= -{\rm div}(|D u|^{p-2} Du)=g-\di f \quad \text{in~} \Omega.
\]
Note that this special case of ours is very much in line with \cite{NP5} in which the $p$-Laplace equation with mixed data of the form $\di(|F|^{p-2}F)+f$ and homogeneous Dirichlet boundary condition is investigated. Moreover, the parabolic version of \eqref{main} is also considered in \cite{N1} and \cite{N2}.

Our aim here is to derive pointwise estimates for the gradient of solutions to \eqref{main} up to the boundary of $\Om$. 
The main contribution involves the reconstructions of a Holder continuity estimate for the solutions to \eqref{main}, a comparison estimate, an iteration lemma and a Gehring's lemma peculiar to the setting in this paper.

To state our main result, we first need some definitions.

\begin{defn}
	A function $u\in W^{1,p}_0(\Om)$ is a solution to \eqref{main} if
	\[
	\int_{\Omega} \A(x,Du) \cdot D\va \, dx=\int_{\Omega} f \cdot D \va \, dx +\int_{\Omega} g \, \va \, dx
	\]
	for all $ \va \in W^{1,p}_0(\Om).$
\end{defn}

In what follows, for each measurable function $h: \Omega \longrightarrow \R$ denote
$$
(h)_B = \tb_{B} h(x) \, dx = \frac{1}{|B|} \int_{B} h(x) \, dx.
$$
The oscillation of $h$ on a set $A \subset \Omega$ is defined by
\[
\osc{A}{h}
:= \su_{x,y\in A} \big( h(x)-h(y) \big)
= \sup_{x \in A} h(x) - \inf_{x\in A} h(x).
\]
Also set
\[
{\bf F}^R_q(f,g)(x)
:= \int_{0}^{R}\left[\left(\tb_{B_{\rho}(x_0)}|f-(f)_{B_{\rho}(x_0)}|^{q'} \, dx\right)^{\frac{1}{q}} + \rho^{\frac{1}{q-1}} \, \left(\tb_{B_{\rho}(x_0)}|g|^{\frac{nq}{nq-n+q}} \, dx\right)^{\frac{nq-n+q}{(nq-n)q}}   \right] \, \frac{d\rho}{\rho}
\]
for each $q \in (1,n)$ and $R > 0$, where $q'$ is the conjugate index of $q$.

Our first main result is as follows.

\begin{thm}\label{grad1}
	Suppose that $u\in C^1(\Omega)$ solves \eqref{main}. 
	Then 
	\[
	|D u(x)| \leq C \left[ {\bf F}^R_p(f,g)(x) + \left(\tb_{B_{R}(x)}|D u(y)|^{p} \, dy\right)^{\frac{1}{p}}\right]
	\]
	for all ball $B_R(x)\subset\Omega$, where $C = C(n, p,\Lambda,Z)$.
\end{thm}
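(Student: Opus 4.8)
\emph{Strategy.}
The plan is to reduce the pointwise bound to an excess-decay estimate for $Du$ along dyadic balls centred at $x$ and then telescope. Since $u\in C^{1}(\Om)$, for any $x$ with $B_{R}(x)\subset\Om$ we have $Du(x)=\lim_{j\to\infty}(Du)_{B_{\rho_{j}}(x)}$ with $\rho_{j}:=2^{-j}R$, hence
\[
|Du(x)|\le \AbbsB{(Du)_{B_{R}(x)}}+\sum_{j\ge 0}\AbbsB{(Du)_{B_{\rho_{j+1}}(x)}-(Du)_{B_{\rho_{j}}(x)}}\le \AbbsB{(Du)_{B_{R}(x)}}+C\sum_{j\ge 0}\mathcal E(\rho_{j}),
\]
where $\mathcal E(\rho):=\ContainB{\tb_{B_{\rho}(x)}|Du-(Du)_{B_{\rho}(x)}|^{p}\,dy}^{1/p}$. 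Thus it suffices to prove an excess-decay inequality for $\mathcal E$ whose iterates sum, after re-absorption, to a multiple of ${\bf F}^{R}_{p}(f,g)(x)+\ContainB{\tb_{B_{R}(x)}|Du|^{p}\,dy}^{1/p}$.

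\emph{Comparison.}
At a fixed scale I would freeze \eqref{main} on $B_{\rho}=B_{\rho}(x)$ in two steps. First let $v$ solve $-\di(\A(y,Dv))=0$ in $B_{\rho}$ with $v-u\in W^{1,p}_{0}(B_{\rho})$. Testing the difference of equations with $u-v$, using the monotonicity \eqref{mono}, replacing $f$ by $f-(f)_{B_{\rho}}$, estimating the zero-order term $\int g(u-v)$ through the dual Sobolev exponent $\frac{np}{np-n+p}=(p^\ast)'$ (where the reconstructed Hölder continuity estimate for solutions of \eqref{main} is used), and — in the singular range $p<2$, where \eqref{mono} only controls $\int\Phi(Du,Dv)$ — converting that energy bound to an $L^{p}$ bound on $Du-Dv$ via the reconstructed Gehring (reverse Hölder) lemma for $Du$ and $Dv$, one obtains the comparison estimate
\[
\ContainB{\tb_{B_{\rho}}|Du-Dv|^{p}\,dy}^{1/p}\ \ls\ \ContainB{\tb_{B_{\rho}}|f-(f)_{B_{\rho}}|^{p'}\,dy}^{1/p}+\rho^{\frac{1}{p-1}}\ContainB{\tb_{B_{\rho}}|g|^{\frac{np}{np-n+p}}\,dy}^{\frac{np-n+p}{(np-n)p}}=:\mathcal D(\rho).
\]
Next let $w$ solve $-\di\bigl(\overline{\A}_{B_{\rho/2}}(Dw)\bigr)=0$ in $B_{\rho/2}$ with $w-v\in W^{1,p}_{0}(B_{\rho/2})$, where $\overline{\A}_{B}(z):=\tb_{B}\A(y,z)\,dy$. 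Testing with $v-w$ and using \eqref{Dini} together with \eqref{condi1}--\eqref{condi3} gives
\[
\ContainB{\tb_{B_{\rho/2}}|Dv-Dw|^{p}\,dy}^{1/p}\ \ls\ \omega(\rho)^{\tau_{0}}\,\ContainB{\tb_{B_{\rho}}|Dv|^{p}\,dy}^{1/p},\qquad \tau_{0}=\tfrac{2}{p}\wedge 1,
\]
the power $\tau_{0}$ being precisely the one forced by the singular/degenerate structure, which is why the Dini integrability \eqref{Dini int} is imposed on $\omega^{\tau_{0}}$.

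\emph{Decay and iteration.}
Since $\overline{\A}_{B_{\rho/2}}$ satisfies the structure conditions with no dependence on $y$, the solution $w$ enjoys interior $C^{1,\beta}$ estimates (De Giorgi--Nash--Moser together with a Campanato/freezing argument for the linearised equation), hence the excess decay
\[
\tb_{B_{\sigma\rho}}|Dw-(Dw)_{B_{\sigma\rho}}|^{p}\,dy\ \ls\ \sigma^{\beta p}\,\tb_{B_{\rho/2}}|Dw-(Dw)_{B_{\rho/2}}|^{p}\,dy\qquad\bigl(0<\sigma\le\tfrac12\bigr)
\]
for some $\beta=\beta(n,p,\Lambda)\in(0,1)$. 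Chaining the two comparison estimates with this decay and using $\tb_{B_{\sigma\rho}}|Dw|^{p}\ls\tb_{B_{\rho}}|Du|^{p}+(\text{comparison errors})$ yields, for every $\sigma\in(0,\tfrac14]$,
\[
\mathcal E(\sigma\rho)\ \le\ C\sigma^{\beta}\,\mathcal E(\rho)+C\sigma^{-n/p}\GroupingB{\,\omega(\rho)^{\tau_{0}}\,\mathcal A(\rho)+\mathcal D(\rho)\,},\qquad \mathcal A(\rho):=\ContainB{\tb_{B_{\rho}(x)}|Du|^{p}\,dy}^{1/p}.
\]
Fixing $\sigma$ so small that $C\sigma^{\beta}\le\tfrac12$ and invoking the reconstructed Dini-type iteration lemma along $\rho_{j}=2^{-j}R$ controls $\sum_{j}\mathcal E(\rho_{j})$: the condition \eqref{Dini int} makes the $\omega$-contributions summable (the dyadic sum of $\omega(\rho_{j})^{\tau_{0}}$ being comparable to $\int_{0}^{R}\omega(r)^{\tau_{0}}\frac{dr}{r}\le Z$, so that the constant depends on $\omega$ only through $Z$), the dyadic sum $\sum_{j}\mathcal D(\rho_{j})$ is comparable to the integral defining the potential so that $\sum_{j}\mathcal D(\rho_{j})\ls{\bf F}^{R}_{p}(f,g)(x)$, and re-absorbing the terms with $\mathcal A(\rho_{j})\ls\mathcal A(R)+\sum_{i<j}\mathcal E(\rho_{i})$ leaves $\sum_{j}\mathcal E(\rho_{j})\ls{\bf F}^{R}_{p}(f,g)(x)+\ContainB{\tb_{B_{R}(x)}|Du|^{p}\,dy}^{1/p}$ with $C=C(n,p,\Lambda,Z)$. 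Substituting into the telescoping bound of the first step proves the theorem.

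\emph{Main obstacle.}
The crux is the very singular regime $p<2$: where $|Du|$ is small the equation degenerates, so the comparison estimates cannot be had from a naive energy inequality. One must work through the functional $\Phi$ of \eqref{mono}, keep careful track of the exponents (this is what pins $\tau_{0}=\tfrac{2}{p}\wedge 1$ in \eqref{Dini int}), and use a Gehring-type self-improvement of the a priori $L^{p}$-integrability just enough to convert the $\Phi$-energy bound into an $L^{p}$ comparison for $Du-Dv$. The summation of the $\omega$-terms in the iteration with a constant depending on $\omega$ only through $Z$, and the treatment of the zero-order source $g$ through the dual Sobolev exponent and the Hölder continuity of solutions, are the remaining points requiring the tailored lemmas announced in the introduction; once these are in place the decay-plus-iteration scheme is routine.
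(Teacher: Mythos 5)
Your overall scheme (two-step freezing, excess decay, dyadic telescoping) does track the paper's, but the crucial comparison estimate in the singular range is wrong, and the error propagates. For $p<2$, testing the difference equation with $u-v$ and using \eqref{mono} gives only $\tb_{B_\rho}\Phi(Du,Dv)\ls\mathcal D(\rho)\,(\tb_{B_\rho}|Du-Dv|^p)^{1/p}$ with $\Phi(z,\eta)=(|z|^2+|\eta|^2)^{(p-2)/2}|z-\eta|^2$. Converting this to an $L^p$ bound on $Du-Dv$ is \emph{not} an integrability issue that a Gehring/reverse-H\"older lemma can resolve: the obstruction is that $\Phi(z,\eta)\not\gtrsim|z-\eta|^p$ pointwise when $|z|,|\eta|$ are large compared to $|z-\eta|$. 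The paper handles this by the algebraic decomposition \eqref{decom} plus Young's inequality, which necessarily produces an extra term $\ls\left[\inf_m(\tb|f-m|^{p'})^{p-1}+r^p(\tb|g|^{\cdots})^{\cdots}\right](\tb_{B_{2r}}|Du|^p)^{2-p}$ in Lemma \ref{111120149}. Your claimed clean bound $(\tb|Du-Dv|^p)^{1/p}\ls\mathcal D(\rho)$ is simply false for $p<2$ when $\mathcal D(\rho)$ is small relative to the local energy of $Du$; correspondingly, the Gehring lemma (Lemma \ref{Gehring bdry}) is only used in the paper for the boundary argument, not here.

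Because this extra term is missing, your excess-decay inequality $\mathcal E(\sigma\rho)\le C\sigma^\beta\mathcal E(\rho)+C\sigma^{-n/p}[\omega(\rho)^{\tau_0}\mathcal A(\rho)+\mathcal D(\rho)]$ is incomplete: Proposition \ref{intA} contains an additional term $C'\delta^{-n/p}\bigl[\inf_m(\tb|f-m|^{p'})^{1/p'}+r(\tb|g|^{\cdots})^{\cdots}\bigr](\tb|Du|^p)^{(2-p)/p}$ that must be carried through the iteration. Absorbing it is not routine — the paper splits into three cases according to the size of $|Du(x_0)|$ relative to $T_{j_0}$ and $T_{j_1+1}$, and uses Young's inequality with the exact exponents $p-1$ and $2-p$ so that the $|Du(x_0)|^{2-p}$ factor can be peeled off and reabsorbed into $|Du(x_0)|$ at cost of a sub-unit constant. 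Your sketch of "re-absorbing the terms with $\mathcal A(\rho_j)$" only treats the $\omega$-contribution and does not address this nonlinear term, which is precisely the hard part of the very singular case. The degenerate case $p\ge 2$ of your argument is essentially correct and matches the paper (with $C'=0$).
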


It is worth mentioning that, in comparison to \cite{Duzamin2} and \cite{Duzamin3}, Theorem 1.2 provides an interior gradient estimate for all $p \in (1,n)$.
The proof of Theorem \ref{grad1} makes use of the following $C^{1,\kappa}$-regularity estimate for the associated homogeneous equation.
This result is also of independent interest.

\begin{thm}\label{theo2}
	Set $\A_0=\A(0,\cdot)$.
	Let $v\in W^{1,p}_{\rm loc}(\Omega)$ be a weak solution of 
	\begin{equation} \label{p-harmonic}
	{\rm div}\, \A_0(D v)=0 \quad \mbox{in } \Omega.
	\end{equation}
	Then there exist constants $C\geq 1$, $\kappa \in (0,1)$ and $\sigma_0 \in (0,1/2]$, all of which depend on $n$ and $p$ only such that                                       
	\begin{equation}\label{holder}
	\osc{B_{\delta r}}{Dv} \leq C \, \delta^\kappa \,  \tb_{B_r}|Dv-(Dv)_{B_r}| \, dx 
	\end{equation}
	for all $\delta \in (0,\sigma_0]$ and for all ball $B_r \subset \Omega$.
\end{thm}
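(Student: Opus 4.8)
\noindent The plan is to deduce \eqref{holder} from a \emph{gradient excess-decay estimate} for solutions of \eqref{p-harmonic}, which in turn rests on the classical interior $C^{1,\beta}$-regularity theory for such equations, and then to convert the excess decay into the claimed pointwise oscillation bound by a Campanato-type telescoping argument.

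\textbf{Step 1 (Reduction and a priori regularity).}
The rescaled function $\tilde v(x):=v(rx)/r$ again solves $\operatorname{div}\A_0(D\tilde v)=0$, and both sides of \eqref{holder} are unchanged under this rescaling, so it suffices to treat the case $r=1$, $B_1\subset\Omega$. Under the growth and ellipticity conditions \eqref{condi1}--\eqref{condi2} (with the ellipticity constant $\Lambda$ regarded as fixed background data), weak solutions of \eqref{p-harmonic} are of class $C^{1,\beta}_{\rm loc}(\Omega)$ for some $\beta\in(0,1)$ and satisfy the quantitative gradient bound
\[
\sup_{B_{3/4}}|Dv|\le C\Big(\dashint_{B_1}|Dv|^{p}\,dx\Big)^{1/p};
\]
see \cite{U77},\cite{To84},\cite{Lieberman86}. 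In particular $Dv$ is continuous on $B_{3/4}$, which provides the compactness used below.

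\textbf{Step 2 (Gradient excess decay).}
The core estimate is the following: there are $\beta\in(0,1)$ and $C\ge 1$, depending only on $n$ and $p$ (and the background $\Lambda$), such that
\[
\dashint_{B_t(x_0)}|Dv-(Dv)_{B_t(x_0)}|\,dx\;\le\;C\Big(\tfrac{t}{\rho}\Big)^{\beta}\dashint_{B_\rho(x_0)}|Dv-(Dv)_{B_\rho(x_0)}|\,dx
\]
for all concentric balls $B_t(x_0)\subset B_\rho(x_0)\subset B_{3/4}$. I would prove this by the standard non-degenerate/degenerate alternative. Writing $\xi:=(Dv)_{B_\rho(x_0)}$ and $\Psi:=\dashint_{B_\rho(x_0)}|Dv-\xi|\,dx$: in the \emph{non-degenerate regime} $\Psi\ll|\xi|$ the linearised equation for $Dv$ on $B_{\rho/2}(x_0)$ is uniformly elliptic with ellipticity comparable to $|\xi|^{p-2}$, and freezing the coefficient matrix $D_2\A_0(\xi)$ together with the constant-coefficient linear $C^{1,\gamma}$-theory yields decay of the excess on all smaller balls; in the \emph{degenerate regime} $\Psi\gtrsim|\xi|$, after rescaling so that $\Psi=1$ (which keeps the nonlinearity in the same structure class, since $z\mapsto\A_0(cz)/c^{p-1}$ obeys \eqref{condi1}--\eqref{condi2} with the same constants) one applies an $\A_0$-harmonic approximation lemma: $Dv$ is $L^p$-close to the gradient of a genuine $\A_0$-harmonic function whose oscillation decays by Step 1, and a Caccioppoli inequality for $Dv$ transfers that decay. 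Iterating the alternative over dyadic scales gives the displayed geometric decay. \emph{This step is the main obstacle}: the delicate points are the passage through the zero set $\{Dv=0\}$, where \eqref{p-harmonic} degenerates, and the bookkeeping needed to keep $\beta$ and $C$ independent of everything but $n$, $p$ (and $\Lambda$).

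\textbf{Step 3 (From excess decay to \eqref{holder}).}
Put $\kappa:=\beta$ and fix a suitable $\sigma_0\in(0,1/2]$ so that all the balls introduced below remain inside $B_1$. For $z\in B_{1/4}$ and $0<s\le t\le 1/4$, summing the estimate of Step 2 along the dyadic radii $2^{-j}t$ in $[s,t]$ telescopes to
\[
\big|(Dv)_{B_s(z)}-(Dv)_{B_t(z)}\big|\;\le\;C\Big(\tfrac{s}{t}\Big)^{\beta}\dashint_{B_t(z)}|Dv-(Dv)_{B_t(z)}|\,dx ,
\]
so, by continuity of $Dv$ (Step 1), the limit $Dv(z)=\lim_{s\downarrow 0}(Dv)_{B_s(z)}$ exists and
\[
\big|Dv(z)-(Dv)_{B_t(z)}\big|\;\le\;C\,t^{\beta}\dashint_{B_1}|Dv-(Dv)_{B_1}|\,dx ,
\]
where the final inequality compares the mean over $B_t(z)$ with the mean over $B_1$ by a chain of doubling estimates inside $B_1$ and one further application of Step 2 at a fixed scale. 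Consequently, for $x,y\in B_\delta$ with $\delta\le\sigma_0$, taking $t$ a fixed multiple of $|x-y|$ and estimating $|Dv(x)-Dv(y)|$ through $|Dv(x)-(Dv)_{B_t(x)}|$ and $|Dv(y)-(Dv)_{B_t(x)}|$ (the latter via a comparable ball centred at $y$) gives
\[
|Dv(x)-Dv(y)|\;\le\;C\,\delta^{\beta}\dashint_{B_1}|Dv-(Dv)_{B_1}|\,dx ,
\]
which, after undoing the rescaling of Step 1, is exactly \eqref{holder} with $\kappa=\beta$.
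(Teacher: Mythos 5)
Your overall strategy matches the paper's (which follows Kuusi--Mingione): distinguish a degenerate regime, where the $L^1$-excess is comparable to the $L^1$-average of $|Dv|$, from a non-degenerate regime, where $Dv$ stays bounded away from zero and the differentiated equation becomes uniformly elliptic with H\"older-continuous coefficients via \eqref{condi3}. Two things differ, and one of them is a genuine misstep.

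First, in your Step~2 degenerate case you invoke an $\A_0$-harmonic approximation lemma, asserting that ``$Dv$ is $L^p$-close to the gradient of a genuine $\A_0$-harmonic function whose oscillation decays.'' But $v$ \emph{is} itself a genuine $\A_0$-harmonic function — it solves \eqref{p-harmonic} exactly, not approximately — so the approximation lemma is a red herring and introduces machinery with nothing to approximate. The correct (and much shorter) resolution, which is what the paper does, is elementary: under the degenerate hypothesis $\tb_{B_r}|Dv-(Dv)_{B_r}|\,dx\ge\theta_0\,|(Dv)_{B_r}|$, the triangle inequality gives $\tb_{B_r}|Dv|\,dx\le(1+1/\theta_0)\,\tb_{B_r}|Dv-(Dv)_{B_r}|\,dx$, so the a priori estimate $\osc{B_{\delta r}}{Dv}\le C_0\,\delta^{\beta_0}\tb_{B_r}|Dv|\,dx$ (from DiBenedetto's sup bound, the $L^1$-improvement of Phuc, and Manfredi's $C^{1,\beta_0}$ a priori theory) immediately yields \eqref{holder}. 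Once you see this, the obstacle you flag — ``passage through the zero set $\{Dv=0\}$'' — disappears, because the dichotomy is engineered precisely so that you only linearize on balls where \eqref{two-way ineq} holds and $Dv$ is quantitatively nonzero; the degenerate zone is handled by the crude a priori oscillation bound alone.

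Second, your organization (prove $L^1$-excess decay at all scales, then telescope via a Campanato argument to recover a pointwise statement) is more roundabout than necessary. Since the left side of \eqref{holder} is already $\osc{B_{\delta r}}{Dv}$ — a pointwise quantity — and the a priori estimate already produces a pointwise oscillation on $B_{\delta r}$, the paper simply plugs the dichotomy into that one-scale estimate and is done; there is no need to iterate an excess-decay inequality or invoke Campanato's characterization of H\"older continuity. Your route would work if the degenerate case were repaired as above, but it redoes work that the a priori $C^{1,\beta_0}$ theory has already provided.
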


Next we state the pointwise estimate up to the boundary of $\Omega$ for a solution $u$ of \eqref{main}.
This is possible provided that $\partial\Omega$ is sufficiently flat in the sense of Reifenberg.

\begin{defn}
Let $\delta\in (0,1)$ and $R_0>0$.
Then $\Omega$ is called a $(\delta,R_0)$-Reifenberg flat domain if for all $x\in\partial \Omega$ and $r\in(0,R_0]$ there exists a system of coordinates $\{z_1,z_2,...,z_n\}$, depending on $r$ and $x$, in which one has $x=0$ and 
	$$
	B_r(0)\cap \{z_n>\delta r\}\subset B_r(0)\cap \Omega\subset B_r(0)\cap\{z_n>-\delta r\}.
	$$
\end{defn}

This class of domains first appeared in the work of Reifenberg \cite{55Re} in the context of Plateau problems.
The class is broad enough to include $C^1$ domains, Lipschitz domains with sufficiently small Lipschitz constants and certain fractal domains. 
Its significance has been solidified in the theory of minimal surfaces and free boundary problems. 
For many interesting properties of Reifenberg flat domains, we refer the readers to \cite{55KeTo1} and \cite{55KeTo2}.

Our pointwise estimate up to the boundary of $\Omega$ is as follows.

\begin{thm} \label{boundary}  
	Suppose that $u$ is a solution of \eqref{main}. 
	Then there exists a $C = C(n,p,\Lambda) > 0$ such that 
	\begin{align}\label{ine3}
	|D u(x)|\leq C d(x)^{-\zeta/p} \, {\bf F}^{2\operatorname{diam}(\Omega)}_p(f,g)(x)
	\end{align}
	for a.e.\ $x\in \Omega$ and for all $\zeta \in (0,n)$, where $d(x) := \mathrm{dist}(x,\partial\Omega)$.
\end{thm}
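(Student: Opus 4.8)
The plan is to derive \eqref{ine3} from the interior pointwise bound of Theorem~\ref{grad1} together with a boundary density estimate for $|Du|^{p}$, the latter produced by an iterated comparison scheme on half-balls adapted to the Reifenberg geometry. Since $u$ is a weak solution of \eqref{main}, the interior $C^{1,\alpha}$ theory for quasilinear equations with Dini-continuous coefficients gives $u\in C^{1}(\Omega)$, so Theorem~\ref{grad1} applies at every point of $\Omega$. Fix $x\in\Omega$ and put $d:=d(x)$; if $d>R_{0}/2$ we apply Theorem~\ref{grad1} on a ball of radius $\sim R_{0}$ and bound the resulting average by $\|Du\|_{L^{p}(\Omega)}$, which we will see is controlled by ${\bf F}^{2\operatorname{diam}(\Omega)}_{p}(f,g)(x)$. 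So assume $d\le R_{0}/2$ and pick $y\in\partial\Omega$ with $|x-y|=d$. Applying Theorem~\ref{grad1} on $B_{d/2}(x)\subset\Omega$, using that the integrand of ${\bf F}$ is nonnegative and $d\le\operatorname{diam}(\Omega)$, and noting $B_{d/2}(x)\subset B_{2d}(y)\cap\Omega$ (so $\tb_{B_{d/2}(x)}|Du|^{p}\,dz\le 4^{n}\tb_{B_{2d}(y)\cap\Omega}|Du|^{p}\,dz$), the proof reduces to the boundary density estimate
\[
\tb_{B_{r}(y)\cap\Omega}|Du|^{p}\,dz\ \le\ C\,r^{-\zeta}\big({\bf F}^{2\operatorname{diam}(\Omega)}_{p}(f,g)(y)\big)^{p}\qquad\text{for }y\in\partial\Omega,\ 0<r\le R_{0},
\]
together with the recentering inequality ${\bf F}^{2\operatorname{diam}(\Omega)}_{p}(f,g)(y)\le C\,{\bf F}^{2\operatorname{diam}(\Omega)}_{p}(f,g)(x)$, which is routine: scales $\rho\ge d$ are handled via $B_{\rho}(y)\subset B_{2\rho}(x)$ and the standard $BMO$-type estimate for changing the centering constant in an average, while scales $\rho<d$ are harmless after extending $f,g$ by $0$ outside $\Omega$.

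To prove the boundary density estimate, fix $y\in\partial\Omega$ and $0<r\le R_{0}$. Using $(\delta,R_{0})$-Reifenberg flatness, flatten $\partial\Omega$ near $y$ and compare $u$ in two steps: first with the solution $h$ of $\di\A(x,Dh)=0$ in $B_{r}(y)\cap\Omega$ with $h=u$ on $\partial(B_{r}(y)\cap\Omega)$, then with the solution $W$ of the frozen homogeneous problem $\di\A_{0}(DW)=0$ on the flattened half-ball, with $W=0$ on its flat face and $W=h$ on the rest of the boundary. Invoking the structure conditions \eqref{condi1}--\eqref{Dini}, Sobolev--Poincar\'e with vanishing boundary values on a set of relative density $\gtrsim 1$, and Gehring's lemma for the slight extra integrability that closes the estimate, one obtains
\[
\tb_{B_{r}(y)\cap\Omega}|Du-DW|^{p}\,dz\ \le\ C\,a(r)^{p}+C\big(\omega(r)^{\tau_{0}}+\delta^{\beta}\big)\tb_{B_{2r}(y)\cap\Omega}|Du|^{p}\,dz,
\]
where $\beta=\beta(n,p)>0$ and $a(r)$ is the integrand appearing in the definition of ${\bf F}^{R}_{p}(f,g)(y)$ at scale $r$. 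Since $\di\A_{0}(DW)=0$ with zero data on a flat face enjoys --- e.g.\ by odd reflection across that face followed by Theorem~\ref{theo2}, or by a direct boundary version of it --- the Lipschitz bound $\|DW\|_{L^{\infty}(B_{r/2}(y)\cap\Omega)}^{p}\le C\tb_{B_{r}(y)\cap\Omega}|DW|^{p}\,dz$, combining the two displays yields, for every $\lambda\in(0,1)$,
\[
\tb_{B_{\lambda r}(y)\cap\Omega}|Du|^{p}\,dz\ \le\ C_{1}\tb_{B_{2r}(y)\cap\Omega}|Du|^{p}\,dz+C_{2}\lambda^{-n}\Big(a(r)^{p}+\big(\omega(r)^{\tau_{0}}+\delta^{\beta}\big)\tb_{B_{2r}(y)\cap\Omega}|Du|^{p}\,dz\Big),
\]
with $C_{1},C_{2}$ independent of $\lambda$. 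Given $\zeta$, first choose $\lambda$ so small that iterating $C_{1}$ over geometric scales of ratio $\lambda/2$ produces a blow-up exponent $\theta=\log C_{1}/\log(2/\lambda)<\zeta$; then shrink $R_{0}$ (hence $\omega(R_{0})$) and the flatness parameter $\delta$, depending on $\lambda$, so that $C_{2}\lambda^{-n}(\omega(r)^{\tau_{0}}+\delta^{\beta})\le\tfrac12$ for $r\le R_{0}$, which lets the coupling term be absorbed. Iterating down to scale $r$ and summing the geometric series of data terms --- convergent thanks to the Dini condition \eqref{Dini int} and the elementary bound $\sum_{k}a(r_{k})^{p}\le\big(\sum_{k}a(r_{k})\big)^{p}\lesssim\big({\bf F}^{R_{0}}_{p}(f,g)(y)\big)^{p}$ --- gives $\tb_{B_{r}(y)\cap\Omega}|Du|^{p}\,dz\le C\,r^{-\zeta}\big(\tb_{B_{R_{0}}(y)\cap\Omega}|Du|^{p}\,dz+({\bf F}^{2\operatorname{diam}(\Omega)}_{p}(f,g)(y))^{p}\big)$. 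Finally the base term is absorbed by the global energy estimate: testing \eqref{main} with $u$ and using \eqref{condi2}, Young's inequality and $W^{1,p}_{0}(\Omega)\hookrightarrow L^{p^{*}}(\Omega)$ gives $\|Du\|_{L^{p}(\Omega)}^{p}\le C\big(\|f\|_{L^{p'}(\Omega)}^{p'}+\|g\|_{L^{(p^{*})'}(\Omega)}^{p'}\big)$, and the scales $\rho\sim\operatorname{diam}(\Omega)$ in ${\bf F}^{2\operatorname{diam}(\Omega)}_{p}(f,g)(y)$ (for which $B_{\rho}(y)\supset\Omega$) already dominate this right-hand side up to a factor depending on $\operatorname{diam}(\Omega)$; since $\tb_{B_{R_{0}}(y)\cap\Omega}|Du|^{p}\,dz\le C(R_{0},\Omega)\|Du\|_{L^{p}(\Omega)}^{p}$, the boundary density estimate follows.

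The reduction and the energy bound are routine; the substance is the comparison inequality on $B_{r}(y)\cap\Omega$ and the iteration. I expect the main obstacle to be twofold. First, establishing the comparison with the correct exponents and, in particular, the self-improving coupling term $\omega(r)^{\tau_{0}}\tb_{B_{2r}}|Du|^{p}$: this is precisely where the very singular structure \eqref{condi1}--\eqref{condi3}, the choice $\tau_{0}=\tfrac{2}{p}\wedge1$, and Gehring's lemma are essential, where the Reifenberg mismatch $\delta^{\beta}$ must be tracked, and where one needs the up-to-the-flat-boundary $C^{1,\kappa}$ (or at least Lipschitz) regularity of $W$ for the general operator $\A_{0}$. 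Second, the order of quantifiers in the iteration --- one must fix $\zeta$, then $\lambda$, then $(R_{0},\delta)$ --- so that the blow-up exponent $\theta$ can be pushed below any prescribed $\zeta\in(0,n)$ while keeping the flatness requirement compatible with the hypotheses on $\Omega$; consequently the constant in \eqref{ine3} will in general also depend on $\zeta$, $Z$ and the Reifenberg parameters of $\Omega$.
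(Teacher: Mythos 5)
Your overall strategy agrees with the paper's: reduce \eqref{ine3} to the interior estimate of Theorem~\ref{grad1} plus a Morrey-type decay estimate for $\tb_{B_r(x_1)}|Du|^p$ centred at a nearest boundary point $x_1\in\partial\Omega$, close the base case with the global energy bound, and accept a $d(x)^{-\zeta/p}$ loss because the decay exponent cannot reach $n$. The reduction, the energy estimate, the use of Lemma~\ref{111120149+}, the Dini-controlled coupling term $\omega(r)^{\tau_0}\tb|Du|^p$, and your remarks on the order of quantifiers and the dependence of the constant on $\zeta$ and the Reifenberg data are all consonant with the paper.

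The genuine gap is the step where you claim a Lipschitz bound $\|DW\|_{L^\infty(B_{r/2}(y)\cap\Omega)}^p \le C\tb_{B_r(y)\cap\Omega}|DW|^p\,dz$ for the frozen-coefficient comparison function, obtained by ``flattening'' the Reifenberg boundary to a half-ball and then invoking odd reflection together with Theorem~\ref{theo2} (or a boundary version of it). Two things go wrong. First, a $(\delta,R_0)$-Reifenberg flat boundary need not be Lipschitz or even a graph, so there is no bi-Lipschitz change of variables that maps $\Omega_r(y)$ to a half-ball while preserving the structural conditions \eqref{condi1}--\eqref{condi3}; the Reifenberg condition is a two-sided trapping condition, not a parametrization. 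Second, even on a genuine half-ball with flat face, odd reflection only produces an equation of the same class when $\A_0$ is odd in $z$ (as for $|z|^{p-2}z$); this is not among the hypotheses \eqref{condi1}--\eqref{condi3}. Indeed, if an up-to-boundary $L^\infty$ estimate held, iterating would give bounded gradient at $\partial\Omega$ and one could take $\zeta=0$, contradicting the remark after Theorem~\ref{boundary} that this is in general impossible. The paper circumvents this precisely at Lemma~\ref{Gehring bdry}: instead of an $L^\infty$ bound, it proves only a reverse-H\"older (Gehring) improvement $\big(\tb_{B_{r/800}(x_1)}|Dv|^q\big)^{1/q}\le C\big(\tb_{B_r(x_1)}|Dv|^p\big)^{1/p}$ for an arbitrary $q>p$, obtained by a good-$\lambda$/Vitali argument (Lemma~\ref{lem:mainlem}) that compares $v$ to a nearby Lipschitz function $\tilde v$ supplied by the Reifenberg approximation theorem \cite[Theorem 2.9]{MP13}, with the flatness parameter $\delta$ and the approximation accuracy $\eta$ absorbed into the level-set machinery. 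This yields only the weaker decay $\tb_{B_{\epsilon r}}|Dv|^p\le C\epsilon^{-pn/q}\tb_{B_r}|Dv|^p$, hence exponent $\sigma=n(1-p/q)<n$, and feeding this into the iteration Lemma~\ref{lem00} is exactly what produces the unavoidable $d(x)^{-\zeta/p}$ factor. You would need to replace your $L^\infty$/reflection step by such a good-$\lambda$ scheme (or an equivalent integral form) to make the comparison estimate valid on Reifenberg domains.
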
    
We remark that it is in general not possible to take $\zeta=0$ in \eqref{ine3} due to a potential irregularity of $\Omega$. Beside, \eqref{ine3} is proved in \cite{NP3} when $f\equiv 0$ and $g$ is a finite signed measure
in $\Omega$.

The outline of the paper is as follows. 
Proposition \ref{theo2} is proved in Section \ref{holder est}.
Section \ref{interior} discusses a comparison estimate and an iteration lemma which are then used in the proof of the interior pointwise gradient estimate - Theorem \ref{grad1}. 
The global pointwise gradient estimate - Theorem \ref{boundary} is obtained in Section \ref{global}.

{\bf Notation.} \quad
Throughout the paper the following set of notation is used without mentioning.
Set $\N = \{0, 1, 2, 3, \ldots\}$ and $\N^* = \{1, 2, 3, \ldots\}$.
For all $a, b \in \R$, $a \wedge b = \min\{a,b\}$ and $a \vee b = \max\{a,b\}$.
For all ball $B \subset \R^d$ we write $w(B) := \int_B w$.
The constants $C$ and $c$ are always assumed to be positive and independent of the main parameters whose values change from line to line.
Given a ball $B = B_r(x)$, we let $t B = B_{tr}(x)$ for all $t > 0$.
The conjugate index of $p \in [1,\infty)$ is denoted by $p'$. 

\section{A $C^{1,\kappa}$-regularity estimate}
\label{holder est}

We start by proving Theorem \ref{theo2}.
This paves way for the proof of Theorem \ref{grad1} later, but is also of independent interest.

\noindent 
\begin{pot}{theo2}
We follow the arguments used in \cite[Proof of Theorem 3.2]{KM} closely.

By \cite[Proposition 3.3]{B83}, for all $\delta \in (0,1)$ there exists a constant $C = C(n,p,\delta) > 0$ such that 
\[
\sup_{B_{\delta r}} |Dv| \le C \, \left( \tb_{B_r} |Dv|^p \, dx \right)^{1/p},
\]
which in turn yields
\begin{equation} \label{sup ineq}
\sup_{B_{\delta r}} |Dv| \le C \, \tb_{B_r} |Dv| \, dx
\end{equation}
due to \cite[Lemmas 2.1 and 2.4]{Phuc14c}.

With \eqref{sup ineq} in mind, one can then apply standard arguments to obtain a $C^{1,\kappa}$ a priori local estimate (cf.\ \cite[Theorem 2, p.15]{Man1986}).
Specifically, there exist constants $C_0 = C_0(n,p) \ge 1$ and $\beta_0 = \beta_0(n,p) \in (0,1)$ such that  
\begin{equation} \label{osc 0}
\osc{B_{\delta r}}{Dv} \le C_0 \, \delta^{\beta_0} \, \tb_{B_r} |Dv| \, dx
\end{equation}
for all $\delta \in (0,2/3)$.

Now we consider two cases.

\noindent
{\bf Case 1}: Suppose
\[
\tb_{B_r} |Dv - (Dv)_{B_r}| \, dx \ge \theta_0 \, |(Dv)_{B_r}|,
\]
where 
\begin{equation} \label{thesig0}
\theta_0 := \frac{\sigma_0^n}{40} \in (0,1)
\quad \mbox{and} \quad
\sigma_0 := \left( \frac{1}{160 C_0} \right)^{1/\beta_0} \in \left(0,\frac{1}{160}\right).
\end{equation}
Then \eqref{osc 0} gives
\begin{eqnarray*}
\osc{B_{\delta r}}{Dv}
&\le& C_0 \, \delta^{\beta_0} \, \tb_{B_r} |Dv - (Dv)_{B_r}| \, dx
+ C_0 \, \delta^{\beta_0} \, |(Dv)_{B_r}| 
\\
&\le& C_0 \, \left(1 + \frac{1}{\theta_0} \right) \, \delta^{\beta_0} \, \tb_{B_r} |Dv - (Dv)_{B_r}| \, dx
\end{eqnarray*}
for all $\delta \in (0,2/3)$.
Therefore \eqref{holder} holds with $\sigma_0 = 1/2$.

\noindent
{\bf Case 2}: Suppose
\[
\tb_{B_r} |Dv - (Dv)_{B_r}| \, dx < \theta_0 \, |(Dv)_{B_r}|,
\]
where $\theta_0$ is as in \eqref{thesig0}.
To avoid triviality, assume further that $|(Dv)_{B_r}| > 0$.
Then 
\begin{equation} \label{c2pre}
\tb_{B_r} |Dv| \, dx \le 2 \, |(Dv)_{B_r}|,
\end{equation}
by the triangle inequality.
Combining this estimate with \eqref{osc 0} and the definition of $\sigma_0$ leads to
\begin{equation} \label{c2}
\osc{B_{2\sigma_0 r}}{Dv} 
\le 4C_0\sigma_0^{\beta_0} \, |(Dv)_{B_r}|
\le |(Dv)_{B_r}|/20.
\end{equation}

At the same time, one also has
\[
\tb_{B_{2\sigma_0 r}} |Dv - (Dv)_{B_r}| \, dx
\le (2\sigma_0)^{-n} \, \tb_{B_r} |Dv - (Dv)_{B_r}| \, dx
\le (2\sigma_0)^{-n} \, \theta_0 \, |(Dv)_{B_r}|.
\]
This ensures the existence of an $\tilde{x} \in B_{2\sigma_0 r}$ such that 
\begin{equation} \label{x tilde}
|Dv(\tilde{x}) - (Dv)_{B_r}| \le |(Dv)_{B_r}|/20.
\end{equation}
Now \eqref{c2} and \eqref{x tilde} together imply
\begin{equation} \label{c2a}
|Dv(x) - (Dv)_{B_r}| \le |(Dv)_{B_r}|/10
\end{equation}
for all $x \in B_{2\sigma_0 r}$.

On the other hand, we infer from \eqref{sup ineq} and \eqref{c2pre} that there exists a $c = c(n,p) > 0$ such that
\begin{equation} \label{c2b}
|Dv(x)| \le c \, \tb_{B_r} |Dv| \le 2c \, |(Dv)_{B_r}|
\end{equation}
for all $x \in B_{2r/3}$.

Combining \eqref{c2a} and \eqref{c2b} yields
\begin{equation} \label{two-way ineq}
|(Dv)_{B_r}|/ c \le |Dv(x)| \le c \, |(Dv)_{B_r}|
\end{equation}
for some $c = c(n,p)$ and for all $x \in B_{2\sigma_0 r}$.

Next we take \eqref{osc 0} and \eqref{c2pre} into account to derive
\begin{equation} \label{c2c}
|Dv(x) - Dv(y)| \le c(n,p) \, |(Dv)_{B_r}| \, |x-y|^{\beta_0}
\end{equation}
for all $x, y \in B_{r/2}$.
By partially differentiating \eqref{p-harmonic} and performing a scaling on the coefficients, the partial derivatives $w_i := \partial_{x_i} v$ satisfy
\[
-{\rm div} \left( \frac{\A_0'(Dv)}{|(Dv)_{B_r}|^{p-2}} \, Dw_i \right) = 0,
\]
which is a uniformly elliptic equation due to \eqref{condi1}, \eqref{condi2} and \eqref{two-way ineq}. 
Furthermore we deduce from \eqref{condi3} and \eqref{c2c} that there exists a $c = c(n,p) > 0$ satisfying
\[
\left|\frac{\A_0'(Dv)(x)}{|(Dv)_{B_r}|^{p-2}} - \frac{\A_0'(Dv)(y)}{|(Dv)_{B_r}|^{p-2}} \right| \le c \, |x-y|^{\alpha \, \beta_0}
\]
for all $x, y \in B_{2\sigma_0 r}$, where $\alpha$ is given by \eqref{condi3}. 

Consequently a standard result provides that $w_i$ is locally $\gamma$-Holder continuous in $B_{\sigma_0 r}$ for all exponent $\gamma < 1$.
Particularly, there exists a $c = c(n,p) > 0$ such that
\[
\osc{B_{\delta r}}{w_i}  
\le c \, \delta^{\beta_0} \, \tb_{B_{2\sigma_0 r}} |w_i - (w_i)_{B_{\sigma_0 r}}| \, dx 
\le 2 c \sigma_0^{-n} \, \delta^{\beta_0} \, \tb_{B_r} |w_i - (Dv)_{B_r}| \, dx
\] 
for all $\delta \in (0,\sigma_0)$, where the last step is justified by \cite[(2.1)]{KM}.
Whence the claim follows for all $\delta \in (0,\sigma_0]$.

This completes our proof.
\end{pot}

As a consequence of Theorem \ref{theo2} , we obtain the following lemma.
\begin{lem}\label{hol-Du}
	Set $\A_0=\A(0,\cdot)$.
	Let $v\in W^{1,p}_{\rm loc}(\Omega)$ be a weak solution of 
	\[
	{\rm div}\, \A_0(D v)=0 \quad \mbox{in } \Omega.
	\]
	Then there exist constants $C \ge 1$, $\kappa \in (0,1)$ and $\sigma_0 \in(0,1/2]$, all of which depend on $n$ and $p$ only, such that
	\[
	\left(\tb_{B_{\rho}(x_0)} |D v - (D v)_{B_{\rho}(x_0)}|^q \, dx\right)^{1/q} \leq C \left(\frac{\rho}{R}\right)^{\kappa} \left(\tb_{B_{R}(x_0)} |D v - (D v)_{B_{R}(x_0)}|^q dx\right)^{1/q}
	\]
	for all $q \in [1,\infty)$, $B_R(x_0)\subset\Omega$ and $0<\rho<\sigma_0 R$.
\end{lem}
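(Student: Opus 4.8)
The plan is to deduce Lemma \ref{hol-Du} from the oscillation decay in Theorem \ref{theo2} in three stages: first pass from the pointwise oscillation bound to an $L^q$ average on one dyadic-type scale, then iterate that single-scale estimate to get decay between arbitrary scales $\rho < \sigma_0 R$, and finally upgrade from $q=1$ on the right-hand side to arbitrary $q \in [1,\infty)$ on both sides by interpolation with the sup bound \eqref{sup ineq}.

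First I would fix a ball $B_R(x_0) \subset \Omega$ and $\delta \in (0,\sigma_0]$, and observe that for any constant vector $c$ one has $\osc{B_{\delta R}}{Dv} \le \osc{B_R}{Dv}$ trivially, but more usefully $\tb_{B_{\delta R}} |Dv - (Dv)_{B_{\delta R}}| \le 2\,\osc{B_{\delta R}}{Dv}$ pointwise-wise comparison, so Theorem \ref{theo2} gives
\[
\tb_{B_{\delta R}} |Dv - (Dv)_{B_{\delta R}}| \, dx \le 2C\,\delta^{\kappa}\, \tb_{B_R} |Dv - (Dv)_{B_R}| \, dx.
\]
This is the $q=1$ version at one scale. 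To promote the left-hand average from $L^1$ to $L^q$, I would use that $Dv$ is continuous (indeed Hölder, by the interior regularity already invoked in the proof of Theorem \ref{theo2}) together with the Caccioppoli-type sup estimate \eqref{sup ineq} applied to the shifted solution — note $Dv - (Dv)_{B_R}$ is not itself $\A_0$-harmonic, but each partial derivative $w_i = \partial_{x_i} v$ solves a uniformly elliptic equation as shown in the proof of Theorem \ref{theo2}, so a reverse-Hölder / interior $L^\infty$--$L^1$ estimate for $w_i$ gives $\su_{B_{\delta R/2}} |w_i - (w_i)_{B_{\delta R}}| \lesssim \tb_{B_{\delta R}} |w_i - (w_i)_{B_{\delta R}}|$, hence $(\tb_{B_{\delta R/2}} |Dv - (Dv)_{B_{\delta R/2}}|^q)^{1/q} \lesssim \osc{B_{\delta R}}{Dv} \lesssim \delta^\kappa \tb_{B_R}|Dv - (Dv)_{B_R}|$, absorbing the loss of the factor $1/2$ into $\sigma_0$ and the constant.

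Next I would iterate. Writing $\rho < \sigma_0 R$, choose $k \in \N^*$ with $\sigma_0^{k+1} R < \rho \le \sigma_0^k R$ (shrinking $\sigma_0$ as needed so that a clean power of $\sigma_0$ straddles $\rho$), and apply the one-scale estimate $k$ times along the chain $B_R \supset B_{\sigma_0 R} \supset \cdots \supset B_{\sigma_0^k R}$. Each application costs a factor $2C\sigma_0^\kappa$, so after $k$ steps we gain $(2C\sigma_0^\kappa)^k \lesssim \sigma_0^{k\kappa'} \lesssim (\rho/R)^{\kappa'}$ for a slightly smaller exponent $\kappa' \in (0,\kappa)$ (again absorbing the $2C$ into the exponent by further decreasing $\sigma_0$, exactly as in the Case-1 argument in the proof of Theorem \ref{theo2}); monotonicity of the $L^q$ average in the radius then covers the intermediate radii $\rho \in (\sigma_0^{k+1}R, \sigma_0^k R]$. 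Relabeling $\kappa'$ as $\kappa$ yields the stated inequality for every $q \in [1,\infty)$, with $C$ and $\kappa$ depending only on $n$ and $p$.

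The main obstacle is the passage from the $L^1$ oscillation statement of Theorem \ref{theo2} to an honest $L^q$ Campanato-type estimate: one must be careful that the object $Dv - (Dv)_{B}$ is vector-valued and not a solution of the original equation, so the sup-to-average bound has to be routed through the linearized equations for the scalar components $w_i = \partial_{x_i} v$ (which are genuinely uniformly elliptic with Hölder coefficients, as established inside the proof of Theorem \ref{theo2}), and one must check that the constants in that interior estimate depend only on $n$ and $p$ and not on the particular solution — this is where the two-sided bound \eqref{two-way ineq} and the uniform ellipticity of the scaled equation are essential. Everything else (the geometric iteration, the exponent bookkeeping, monotonicity of averages) is routine.
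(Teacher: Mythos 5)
Your proposal over-engineers a step that Theorem \ref{theo2} already gives you for free, and the detour you take in that step contains a genuine gap.

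The key observation you missed is that the conclusion of Theorem \ref{theo2} is already a \emph{sup} bound: $\osc{B_{\delta r}}{Dv}$ controls $|Dv(x)-(Dv)_{B_{\delta r}}|$ pointwise for every $x\in B_{\delta r}$, so the left-hand $L^q$ average comes for free, and no reverse-H\"older or sup-to-mean upgrade is needed. Given $B_R(x_0)\subset\Omega$ and $0<\rho<\sigma_0 R$, set $\delta=\rho/R\in(0,\sigma_0)$ and apply Theorem \ref{theo2} with $r=R$:
\begin{align*}
\left(\tb_{B_{\rho}(x_0)}|Dv-(Dv)_{B_{\rho}(x_0)}|^q\,dx\right)^{1/q}
&\le c(n)\,\osc{B_{\rho}(x_0)}{Dv}
\le c(n)\,C\left(\frac{\rho}{R}\right)^{\kappa}\tb_{B_R(x_0)}|Dv-(Dv)_{B_R(x_0)}|\,dx\\
&\le c(n)\,C\left(\frac{\rho}{R}\right)^{\kappa}\left(\tb_{B_R(x_0)}|Dv-(Dv)_{B_R(x_0)}|^q\,dx\right)^{1/q},
\end{align*}
the last step being Jensen's inequality for $q\ge1$. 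That is the whole lemma. In particular, no dyadic iteration is needed: Theorem \ref{theo2} is stated for \emph{all} $\delta\in(0,\sigma_0]$, not just $\delta=\sigma_0$, so a single application covers every admissible $\rho$.

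The step where you instead first degrade the oscillation bound to an $L^1$ Campanato estimate and then try to re-promote to $L^q$ by invoking a sup-to-average bound for the linearized components $w_i=\partial_{x_i}v$ does not go through as stated. The uniform ellipticity of the equation solved by $w_i$, with the coefficient scaling by $|(Dv)_{B_r}|^{p-2}$, is established in the proof of Theorem \ref{theo2} \emph{only under the Case 2 hypothesis}, i.e.\ only when $\tb_{B_r}|Dv-(Dv)_{B_r}|<\theta_0|(Dv)_{B_r}|$ and hence the two-sided bound \eqref{two-way ineq} holds and $Dv$ is bounded away from $0$ on the smaller ball. On a general ball $B_{\delta R}$ the gradient $Dv$ may vanish, the linearized equation degenerates, and the sup-to-average estimate for $w_i$ you want has no uniform constant. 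That is precisely why Theorem \ref{theo2} itself needs the Case 1/Case 2 dichotomy; you cannot quote the Case 2 machinery unconditionally. Fortunately, as shown above, you never need it here.
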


\section{Interior pointwise gradient estimates} \label{interior}

The main goal of this section is to prove Theorem \ref{grad1}. 
To this end, we first derive several preliminary results.

Let $u\in W_{0}^{1,p}(\Omega)$ be a solution of \eqref{main}.
Suppose that $B_{2r}=B_{2r}(x_0)\Subset\Omega$.  

In this whole section, let $w\in  u+ W_{0}^{1,p}(B_{2r})$ be the unique solution to the  problem 
\begin{equation}
\label{thuannhat1}\left\{ \begin{array}{rcl}
- \operatorname{div}\left( {\A(x,D w)} \right) &=& 0 \quad {\rm in} \quad B_{2r}, \\ 
w &=& u \quad {\rm on} \quad \partial B_{2r}
\end{array} \right.
\end{equation}
and let $v\in W_0^{1,p}(B_r(x_0))$ be the  unique solution of 
\begin{equation*}
\left\{ \begin{array}{rcl}
- \operatorname{div}\left( {\A(x_0,D v)} \right) &=& 0 \quad {\rm in} \quad B_{r}, \\ 
v &=& w \quad {\rm on} \quad \partial B_{r}.  
\end{array} \right.
\end{equation*}

Several convenient properties of $w$ and $v$ can immediately listed here.

As a result of \cite[Lemmas 2.1 and 2.4]{Phuc14c} (also cf.\ \cite[(3.3)]{NP3}), there exists a $C = C(n,p,\Lambda) > 0$ such that 
\begin{align}\label{es29}
\|D v\|_{L^\infty(B_{r/2})}\leq C \left(\tb_{B_r}|D v|^{t}dx\right)^{1/t}
\end{align}
for all $t>0$.

We also have an estimate for the difference  $D v-D w$ as follows.
There exists a $C = C(n,p,\Lambda) > 0$ such that 
\begin{equation}\label{es23}
\tb_{B_{r}}|D v-D w|^p \, dx 
\leq C \omega(r)^{\tau_0 p} \tb_{B_{r}}|D w|^p \, dx,
\end{equation}
whose proof can be found in \cite[(4.35)]{Duzamin2} and \cite[Lemma 3.4]{Duzamin3} for $p \in (1,2]$ and $p \in (2,n)$ respectively.

%
%

\subsection{New comparison estimate}

Now we construct a comparison estimate that serves to prove Theorem \ref{grad1}.
It is new as it adapts our general setting proposed in this paper.

\begin{lem}\label{111120149}
	
	Let $u \in W_0^{1,p}(\Omega)$ be a solution to \eqref{main}. Then there exists a constant $C = C(n,p,\Lambda) > 0$ such that
	\begin{align*}
	& \tb_{B_{2r}}|Du-Dw|^pdx\\
	& \leq C \left[\inf_{m \in \R^n} \tb_{B_{2r}}|f-m|^{p'}dx + r^{p'}\left(\tb_{B_{2r}}|g|^{\frac{np}{np-n+p}}dx\right)^{\frac{np-n+p}{np-n}} \right]\\
	& \quad  +  C \left[\inf_{m \in \R^n} \left(\tb_{B_{2r}}|f-m|^{p'}dx\right)^{p-1} + r^p\left(\tb_{B_{2r}}|g|^{\frac{np}{np-n+p}}dx\right)^{\frac{np-n+p}{n}} \right] \left(\tb_{B_{2r}}|D u|^{p}dx\right)^{2-p}
	\end{align*}
	for $p \in (1,2)$ and
	\begin{align*}
	\tb_{B_{2r}}|D u-D w|^pdx
	\leq C \left[\inf_{m \in \R^n} \tb_{B_{2r}}|f-m|^{p'}dx + r^{p'}\left(\tb_{B_{2r}}|g|^{\frac{np}{np-n+p}}dx\right)^{\frac{np-n+p}{np-n}} \right]
	\end{align*}
	for $p\in [2,n)$.
\end{lem}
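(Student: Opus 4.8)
The plan is to test the weak formulation of \eqref{main} against the natural comparison function $\varphi := u - w \in W_0^{1,p}(B_{2r})$, which is admissible because $w = u$ on $\partial B_{2r}$. Since $w$ solves the homogeneous equation \eqref{thuannhat1}, subtracting the two weak formulations gives
\[
\int_{B_{2r}} \bigl( \A(x,Du) - \A(x,Dw) \bigr) \cdot D(u-w) \, dx = \int_{B_{2r}} f \cdot D(u-w) \, dx + \int_{B_{2r}} g \, (u-w) \, dx.
\]
Because both $u$ and $w$ agree on the boundary, one may replace $f$ by $f - m$ for any constant $m \in \R^n$ in the first term on the right. The left-hand side is bounded below by the monotonicity inequality \eqref{mono}, i.e.\ by $C(n,p,\Lambda) \int_{B_{2r}} \Phi(Du,Dw) \, dx$. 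The strategy then splits according to the sign of $p-2$, exactly mirroring the structure of $\Phi$.

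For $p \in [2,n)$, $\Phi(Du,Dw) = |Du - Dw|^p$, so the left side already controls $\int_{B_{2r}} |Du-Dw|^p \, dx$ directly. I would estimate the two terms on the right by Hölder's inequality: $\int |f-m|\,|D(u-w)| \le \bigl(\int|f-m|^{p'}\bigr)^{1/p'}\bigl(\int|D(u-w)|^p\bigr)^{1/p}$, and for the $g$-term use the Sobolev inequality $\|u-w\|_{L^{p^*}(B_{2r})} \lesssim \|D(u-w)\|_{L^p(B_{2r})}$ with $p^* = \frac{np}{n-p}$, then Hölder with the dual exponent $(p^*)' = \frac{np}{np-n+p}$. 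Both resulting bounds carry a factor $\bigl(\int|D(u-w)|^p\bigr)^{1/p}$, which after Young's inequality can be absorbed into the left-hand side; rescaling to averages and keeping track of the factors of $r$ (one $r$ from the Sobolev/Poincaré scaling on $B_{2r}$, distributed as $r^{p'}$ after raising to the appropriate power) produces precisely the stated $p \ge 2$ estimate, and then infimizing over $m$ finishes that case.

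For $p \in (1,2)$ the situation is genuinely harder because $\Phi(Du,Dw) = (|Du|^2+|Dw|^2)^{(p-2)/2}|Du-Dw|^2$ does not directly dominate a power of $|Du-Dw|$; this is the main obstacle. The standard device is the interpolation trick: write
\[
\int_{B_{2r}} |Du-Dw|^p \, dx = \int_{B_{2r}} \bigl(|Du|^2+|Dw|^2\bigr)^{\frac{p(p-2)}{4}} |Du-Dw|^p \cdot \bigl(|Du|^2+|Dw|^2\bigr)^{\frac{p(2-p)}{4}} \, dx
\]
and apply Hölder with exponents $2/p$ and $2/(2-p)$ to get
\[
\int_{B_{2r}} |Du-Dw|^p \, dx \le \Bigl( \int_{B_{2r}} \Phi(Du,Dw) \, dx \Bigr)^{p/2} \Bigl( \int_{B_{2r}} \bigl(|Du|^2+|Dw|^2\bigr)^{p/2} \, dx \Bigr)^{(2-p)/2}.
\]
The second factor is controlled by $\int_{B_{2r}} |Du|^p \, dx$, using the energy comparison $\int_{B_{2r}}|Dw|^p \lesssim \int_{B_{2r}}|Du|^p$ (a standard consequence of \eqref{condi1}, \eqref{condi2} and minimality of $w$). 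Plugging in the bound for $\int \Phi(Du,Dw)$ obtained from the energy identity above — whose right-hand side is $\lesssim \inf_m \int|f-m|^{p'} + r^{p'}(\int|g|^{\cdots})^{\cdots}$, possibly times a small power of $\int|Du-Dw|^p$ that gets reabsorbed — and raising to the power $p/2$, then multiplying by the $(\int|Du|^p)^{(2-p)/2}$ factor, gives the two-part right-hand side of the $p<2$ estimate (the first bracket from the $p/2$-power of the "diffusion" terms, the second bracket from their $(p-1)$-power, paired with $(\int|Du|^p)^{2-p}$ after the bookkeeping). Converting integrals to averages and matching powers of $r$ against the claimed exponents $p'$, $p$ and the various exponents on the $g$-norms is then a routine, if tedious, verification, and infimizing over $m$ at the end yields the stated form.
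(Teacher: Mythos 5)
Your overall strategy is correct and the $p\ge 2$ case is essentially identical to the paper's. For $p\in(1,2)$, however, you take a genuinely different route, and the bookkeeping at the end of your argument does not quite match what your steps actually produce.

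Your interpolation
$$\fint_{B_{2r}}|Du-Dw|^p\,dx\le\Bigl(\fint_{B_{2r}}\Phi(Du,Dw)\,dx\Bigr)^{p/2}\Bigl(\fint_{B_{2r}}(|Du|^2+|Dw|^2)^{p/2}\,dx\Bigr)^{(2-p)/2}$$
is a valid application of H\"older with exponents $2/p$ and $2/(2-p)$, and the energy bound $\fint|Dw|^p\lesssim\fint|Du|^p$ you invoke is indeed standard (test the homogeneous equation with $w-u$, use \eqref{condi1} and \eqref{mono} with $\eta=0$, then Young). But if you now insert \eqref{t9} — which bounds $\fint\Phi$ by $[\mathrm{stuff}_1]\cdot(\fint|Du-Dw|^p)^{1/p}$ — you get $X\le C[\mathrm{stuff}_1]^{p/2}X^{1/2}(\fint|Du|^p)^{(2-p)/2}$ with $X=\fint|Du-Dw|^p$, and dividing by $X^{1/2}$ (or one more Young) yields \emph{only} the second bracket: $X\le C[\mathrm{stuff}_1]^p(\fint|Du|^p)^{2-p}$. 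Your claim that the first bracket "comes from the $p/2$-power of the diffusion terms" is not what happens; the first bracket simply does not appear in this route. Since the first bracket is nonnegative, your bound still implies the statement of the lemma (it is in fact formally sharper, vanishing as $\fint|Du|^p\to 0$), so the conclusion is correct — only the description of the bookkeeping is off.

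The paper instead uses the more intricate pointwise decomposition \eqref{decom}: it writes $(|Du|+|Dw|)^{p(2-p)/2}\le(|Du|+|Du-Dw|)^{p(2-p)/2}$ and splits, then Young's inequality absorbs the $|Du-Dw|$ piece before H\"older with $2/p$ and $2/(2-p)$ is applied. This produces both brackets directly and, notably, avoids the auxiliary energy estimate $\fint|Dw|^p\lesssim\fint|Du|^p$, since the factor $|Dw|$ never survives in a standalone integral. Your route is shorter and needs one extra (but standard) lemma; the paper's route is self-contained within the energy identity and matches the two-term shape used verbatim in Proposition \ref{intA}. Both are valid; your write-up should just correct the attribution of where each bracket comes from, or simply note that your derivation yields the second bracket alone, which dominates and hence suffices.
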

\begin{proof}
	Let $m \in \R^n$.
	Then
	\begin{equation}\label{t1}
	\begin{aligned}
	\tp_{B_{2r}} \big[ A(x,Du)-A(x,Dw) \big] \cdot D\va \, dx=\int_{B_{2r}} (f-m) \cdot D\va \, dx+\int_{\R^n} g \, \va \, dx
	\end{aligned}
	\end{equation}
	for all $\va \in W^{1,p}_0(B_{2r})$.
Choosing $\va =u-w$ as a test function in \eqref{t1} gives
	\begin{align*}
	\tp_{B_{2r}} \big[ A(x,Du)-A(x,Dw) \big] \cdot D(u-w) dx
	=\int_{B_{2r}} (f-m) \cdot D (u-w) dx+\int_{\R^n} g \, (u-w) \, dx.
	\end{align*}
	Using \eqref{mono} and Holder's inequality this leads to
	\begin{align*}
	\tb_{B_{2r}} \Phi(Dw, Du) \, dx
	&\le C(n,p,\Lambda) \left(\tb_{B_{2r}}|f-m|^{p'}dx\right)^{1/p'} \left(	\tb_{B_{2r}}|D u-D w|^{p}dx\right)^{\frac{1}{p}}
	\\
	& \quad {} + C(n,p,\Lambda)\left(\tb_{B_{2r}}|g|^{\frac{np}{np-n+p}}dx\right)^{\frac{np-n+p}{np}} \left(	\tb_{B_{2r}}| u- w|^{\frac{np}{n-p}}dx\right)^{\frac{n-p}{np}}.
	\end{align*}	
	Next we apply Sobolev's inequality to obtain
	\begin{equation*}
	\begin{aligned}
	\left(	\tb_{B_{2r}}| u- w|^{\frac{np}{n-p}}dx\right)^{\frac{n-p}{np}}
	& \le C(n,p) \, r\left(	\tb_{B_{2r}}|D u- D w|^{p}dx\right)^{1/p}.
	\end{aligned}
	\end{equation*}	
	Consequently,	
	\begin{equation}\label{t9}
	\begin{aligned}
	\tb_{B_{2r}} \Phi(Dw, Du) \, dx 
	& \le C(n,p,\Lambda) \left[\left(\tb_{B_{2r}}|f-m|^{p'}dx\right)^{1/p'} + r\left(\tb_{B_{2r}}|g|^{\frac{np}{np-n+p}}dx\right)^{\frac{np-n+p}{np}} \right]\\
	& \quad \times \left(	\tb_{B_{2r}}|D u-D w|^{p}dx\right)^{\frac{1}{p}}.
	\end{aligned}
	\end{equation}

Now we consider two cases.

{\bf Case 1}: Suppose $p \in (1,2)$.
Then rewrite
\begin{equation} \label{decom}
	\begin{aligned}
	|D u -D w|^p
	& =\left(|D u|+|D w|\right)^{\frac{p(p-2)}{2}}|D u -D w|^p\left(|D u|+|D w|\right)^{\frac{p(2-p)}{2}}\\
	& \leq \left(|D u|+|D w|\right)^{\frac{p(p-2)}{2}}|D u -D w|^p\left(|D u|+|D u-D w|\right)^{\frac{p(2-p)}{2}}\\
	& \leq \left(|D u|+|D w|\right)^{\frac{p(p-2)}{2}}|D u -D w|^p|D u|^{\frac{p(2-p)}{2}}\\
	& \quad +\left(|D u|+|D w|\right)^{\frac{p(p-2)}{2}}|D u -D w|^p\left(|D u-D w|\right)^{\frac{p(2-p)}{2}}.
	\end{aligned}
\end{equation}
Using Young’s inequality in the form
	\[
	ab^{\frac{2-p}{2}}\leq \frac{p \ep^{\frac{p-2}{p}}a^{\frac{2}{p}}}{2}+\frac{(2-p)\ep b}{2}
	\]
with an appropriate $\epsilon > 0$, we arrive at
	\begin{align*}
	|D u -D w|^p
	& \leq \left(|D u|+|D w|\right)^{\frac{p(p-2)}{2}}|D u -D w|^p|D u|^{\frac{p(2-p)}{2}} 
	\\
	& \quad + C(p) \, (|D w|+|D u|)^{p-2}|D (u-w)|^2+\frac{1}{2}|D u -D w|^p,
	\end{align*}
	whence it follows from \eqref{decom} that
	\begin{align*}
	|D u -D w|^p
	& \leq C(p) \, (|D w|+|D u|)^{p-2}|D (u-w)|^2\\
	& \quad +C(p) \, \left(|D u|+|D w|\right)^{\frac{p(p-2)}{2}}|D u -D w|^p|D u|^{\frac{p(2-p)}{2}}.
	\end{align*}
	By integrating both sides of the above estimate on $B_{2r}$ and the apply Holder’s inequality with exponents $\frac{2}{p}$ and $\frac{2}{2-p}>1$, we obtain
	\begin{equation}\label{t5}
	\begin{aligned}
	\tb_{B_{2r}}|D (u-w)|^pdx
	& \leq C(p) \tb_{B_{2r}}(|D w|+|D u|)^{p-2}|D (u-w)|^2dx\\
	& \quad+  C(p) \left(\tb_{B_{2r}}(|D w|+|D u|)^{p-2}|D (u-w)|^2dx\right)^{\frac{p}{2}}\\
	& \hspace{2cm} \times \left(\tb_{B_{2r}}|D u|^{p}dx\right)^{\frac{2-p}{2}}.
	\end{aligned}
	\end{equation}
	
	Combining \eqref{t9} with \eqref{t5} yields 
	\begin{align*}
	\tb_{B_{2r}}|D (u-w)|^pdx
	& \leq C(n,p,\Lambda) \left[\left(\tb_{B_{2r}}|f-m|^{p'}dx\right)^{1/p'} + r\left(\tb_{B_{2r}}|g|^{\frac{np}{np-n+p}}dx\right)^{\frac{np-n+p}{np}} \right]\\
	& \hspace{2.2cm} \times \left(	\tb_{B_{2r}}|D u-D w|^{p}dx\right)^{\frac{1}{p}}\\
	& \quad+  C(n,p,\Lambda) \left[\left(\tb_{B_{2r}}|f-m|^{p'}dx\right)^{1/p'} + r\left(\tb_{B_{2r}}|g|^{\frac{np}{np-n+p}}dx\right)^{\frac{np-n+p}{np}} \right]^{p/2}\\
	& \hspace{2.5cm} \times \left(	\tb_{B_{2r}}|D u-D w|^{p}dx\right)^{\frac{1}{2}} \left(\tb_{B_{2r}}|D u|^{p}dx\right)^{\frac{2-p}{2}}.
	\end{align*}

	Another application of Young's inequality yields
	\begin{align*}
	& \tb_{B_{2r}}|D (u-w)|^pdx\\
	& \leq C(n,p,\Lambda) \left[\tb_{B_{2r}}|f-m|^{p'}dx + r^{p'}\left(\tb_{B_{2r}}|g|^{\frac{np}{np-n+p}}dx\right)^{\frac{np-n+p}{np-n}} \right]\\
	& \quad  +  C(n,p,\Lambda) \left[\left(\tb_{B_{2r}}|f-m|^{p'}dx\right)^{p-1} + r^p\left(\tb_{B_{2r}}|g|^{\frac{np}{np-n+p}}dx\right)^{\frac{np-n+p}{n}} \right] \left(\tb_{B_{2r}}|D u|^{p}dx\right)^{2-p}
	\end{align*}
	as required.

{\bf Case 2}: Suppose $p \in [2,n)$.
Starting from \eqref{t9}, we have
\begin{align*}
\tb_{B_{2r}} |D (u-w)|^pdx 
& \le C(n,p,\Lambda) \left[\left(\tb_{B_{2r}}|f-m|^{p'}dx\right)^{1/p'} + r\left(\tb_{B_{2r}}|g|^{\frac{np}{np-n+p}}dx\right)^{\frac{np-n+p}{np}} \right]\\
& \quad \times \left(	\tb_{B_{2r}}|D u-D w|^{p}dx\right)^{\frac{1}{p}}.
\\
& \leq \frac{1}{2} \tb_{B_{2r}}|D u-D w|^{p}dx\\
& \quad + C(n,p,\Lambda) \left[\tb_{B_{2r}}|f-m|^{p'}dx + r^{p'}\left(\tb_{B_{2r}}|g|^{\frac{np}{np-n+p}}dx\right)^{\frac{np-n+p}{np-n}} \right]
\end{align*}
as required, where we used Young's inequality in the second step.

This completes the proof.
\end{proof}

\subsection{Iteration lemma}
Another key ingredient in the proof of Theorem \ref{grad1} is the iteration lemma given by Proposition \ref{intA} below.

In what follows, define
$$
\mathbf{I}(\rho)=\mathbf{I}(x_0,\rho):=	\left(\tb_{B_{\rho}}|D u-\left(D u \right)_{B_{\rho}}|^pdx\right)^{1/p}
$$
for a ball $B_\rho=B_\rho(x_0)\subset\Omega$.

\begin{prop} \label{intA}  
Suppose  that $u\in W_{0}^{1,p}(\Omega)$ is a solution of \eqref{main}. 
Then there exist constants $\kappa \in (0,1)$ and $\sigma_0\in (0,1/2]$ such that  
	\begin{align*}
	& \mathbf{I}(\delta r)\\
	 \leq \,\, & C_0 \, \delta^{\kappa/p} \, \mathbf{I}(r)
	 + C_0 \, \delta^{-n/p} \left[\inf_{m \in \R^n} \left(\tb_{B_{r}}|f-m|^{p'}dx\right)^{1/p} + r^{1/(p-1)}\left(\tb_{B_{r}}|g|^{\frac{np}{np-n+p}}dx\right)^{\frac{np-n+p}{(np-n)p}}  \right]\\
	&  +  C' \, \delta^{-n/p} \left[\inf_{m \in \R^n}\left(\tb_{B_{r}}|f-m|^{p'}dx\right)^{1/p'} + r\left(\tb_{B_{r}}|g|^{\frac{np}{np-n+p}}dx\right)^{\frac{np-n+p}{np}}\right] 
\left(\tb_{B_{r}}|D u|^{p}dx\right)^{(2-p)/p}
\\
& {} + C_0 \, \delta^{-n/p} \, \omega(r)^{\tau_0} \, \left(\tb_{B_{r}}|D u|^p dx\right)^{1/p}
	\end{align*}
	for all $\delta \in (0,\sigma_0]$ and $B_{2r}(x_0)\Subset\Omega$, where $C_0 = C_0(n,p,\Lambda)$ and $C' = C'(n,p,\Lambda)$.
\end{prop}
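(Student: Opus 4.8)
The plan is to decompose $Du - (Du)_{B_{\delta r}}$ through the two comparison functions $w$ and $v$, estimate each piece, and assemble. Specifically, I would start from the triangle-inequality chain
\[
\mathbf{I}(\delta r) \le C\left(\tb_{B_{\delta r}} |Dv - (Dv)_{B_{\delta r}}|^p\,dx\right)^{1/p} + C\left(\tb_{B_{\delta r}} |Dw - Dv|^p\,dx\right)^{1/p} + C\left(\tb_{B_{\delta r}} |Du - Dw|^p\,dx\right)^{1/p},
\]
using that $\mathbf{I}(\delta r)$ is comparable to $\inf_m \left(\tb_{B_{\delta r}}|Du - m|^p\right)^{1/p}$ and inserting $m = (Dv)_{B_{\delta r}}$.

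For the first term I would invoke Lemma \ref{hol-Du} applied to $v$ (which solves ${\rm div}\,\A_0(Dv) = 0$ in $B_r$, so Lemma \ref{hol-Du} is available on the ball $B_r$ provided $\delta < \sigma_0$, possibly after shrinking $\sigma_0$), obtaining
\[
\left(\tb_{B_{\delta r}} |Dv - (Dv)_{B_{\delta r}}|^p\,dx\right)^{1/p} \le C\,\delta^{\kappa}\left(\tb_{B_r} |Dv - (Dv)_{B_r}|^p\,dx\right)^{1/p} \le C\,\delta^{\kappa}\left[\mathbf{I}(r) + \left(\tb_{B_r}|Du - Dw|^p\right)^{1/p} + \left(\tb_{B_r}|Dw - Dv|^p\right)^{1/p}\right],
\]
where the second inequality again uses the triangle inequality and the minimality of averages. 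The remaining two correction terms $\tb_{B_r}|Du - Dw|^p$ and $\tb_{B_r}|Dw - Dv|^p$ on both $B_{\delta r}$ and $B_r$ are controlled by plugging in $\delta^{-n/p}$ losses to pass from $B_{\delta r}$ to $B_r$: for the $Du - Dw$ term I would use Lemma \ref{111120149} (with $B_{2r}$; note $B_{2r}(x_0)\Subset\Omega$ is assumed), splitting into the $p<2$ and $p\ge 2$ cases, which exactly produces the $f$-, $g$- and (for $p<2$) the extra $\left(\tb_{B_r}|Du|^p\right)^{(2-p)/p}$ terms with constants $C_0$ and $C'$; for the $Dw - Dv$ term I would use \eqref{es23}, which gives the $C_0\,\delta^{-n/p}\,\omega(r)^{\tau_0}\left(\tb_{B_r}|Dw|^p\right)^{1/p}$ contribution, and then bound $\tb_{B_r}|Dw|^p \lesssim \tb_{B_{2r}}|Du|^p$ by energy comparison (testing \eqref{thuannhat1} against $u - w$ together with \eqref{mono}) — possibly already recorded implicitly — to replace $Dw$ by $Du$. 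One should also absorb the extra $\delta^{\kappa}$-weighted copies of the correction terms into the plain $\delta^{-n/p}$ copies, using $\delta^{\kappa} \le \delta^{-n/p}$.

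The main obstacle I anticipate is bookkeeping the interaction between the Hölder-decay factor $\delta^{\kappa}$ coming from Lemma \ref{hol-Du} and the volume-loss factor $\delta^{-n/p}$ coming from restricting the comparison estimates to $B_{\delta r}$: one must make sure the final statement has $\delta^{\kappa/p}$ (not $\delta^{\kappa}$) on the main term, which suggests the paper is applying Lemma \ref{hol-Du} with exponent adjusted, or relabeling $\kappa$; I would reconcile this by noting $\delta^{\kappa}\le \delta^{\kappa/p}$ for $\delta<1$ and $p>1$, so writing $\delta^{\kappa/p}$ is harmless and matches the claimed form. A secondary technical point is justifying that Lemma \ref{hol-Du} is legitimately applicable to $v$ on the scale $B_r$ (since $v$ is defined only on $B_r$): one shrinks $\sigma_0$ if necessary so that $\delta r < \sigma_0 r$ falls within the lemma's range, and the homogeneous equation ${\rm div}\,\A_0(Dv)=0$ is precisely of the required type since $\A_0 = \A(0,\cdot)$. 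Finally, I would collect all four contributions, relabel the accumulated absolute constants as $C_0$ and $C'$, and pass to the infimum over $m\in\R^n$ in the $f$-terms, yielding exactly the asserted inequality for all $\delta\in(0,\sigma_0]$.
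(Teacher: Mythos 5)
Your proposal is correct and follows essentially the same route as the paper: decompose $\mathbf{I}(\delta r)$ via the triangle inequality through the frozen-coefficient comparison function $v$, apply Lemma~\ref{hol-Du} to $v$ on $B_r$ (after ensuring $\delta \le \sigma_0$) for the $\delta^{\kappa/p}$ decay on the leading term, use the $\delta^{-n/p}$ volume loss to pass the correction integrals from $B_{\delta r}$ to $B_r$, and then control $\tb_{B_r}|Du-Dw|^p$ by Lemma~\ref{111120149} and $\tb_{B_r}|Dw-Dv|^p$ by \eqref{es23}, replacing $|Dw|$ by $|Du|$ (the paper does this by absorbing a $|Du-Dw|$ term and using $\omega\le 1$, rather than by the energy comparison you invoke, but both are valid). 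Your bookkeeping remarks about $\delta^{\kappa}$ versus $\delta^{\kappa/p}$ and about the applicability of Lemma~\ref{hol-Du} at scale $B_r$ correctly anticipate the minor relabeling the paper performs.
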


\begin{rem} \label{rem1}
If $p \in [2,n)$ then we may choose $C' = 0$ in Proposition \ref{intA}.
\end{rem}

\begin{proof}
	By virtue of Lemma \ref{hol-Du}, there exist constants $C \ge 1$, $\kappa \in (0,1)$ and $\sigma_0 \in(0,1/2]$, all of which depend on $n$ and $p$ only, such that
	\begin{align*}
	\tb_{B_{\delta r}}|D u-\left(D u\right)_{B_{\delta r}}|^pdx
	&\leq
	C\tb_{B_{\delta r}}|D w-\left(D w\right)_{B_{\delta r}}|^pdx+C	\tb_{B_{\delta r}} |D u-D w|^{p}dx
	\\
	&\leq
	C	\delta^{\kappa}\tb_{B_{r}}|D w-\left(D w\right)_{B_{r}}|^pdx+C\delta^{-n}	\tb_{B_{r}} |D u-D w|^{p}dx
	\\
	&\leq
	C\delta^{\kappa}\tb_{B_{r}}|D u-\left(D u\right)_{B_{r}}|^pdx+C\delta^{-n}	\tb_{B_{r}} |D u-D w|^{p}dx
	\end{align*}
	for all $\delta \in (0,\sigma_0]$.
	
	Next let $m \in \R^n$.
	We aim to bound the second term on the right-hand side of the above estimate.
	Observe that 
	\begin{align*}
	\tb_{B_{r}} |D u-D v|^{p}dx 
	& \le C \, \tb_{B_{r}} |D u-Dw|^{p} \, dx + \tb_{B_{r}} |Dw-D v|^{p} \, dx
	\\
	& \le C \, \tb_{B_{r}} |D u-Dw|^{p} \, dx +  C \omega(r)^{\tau_0 p} \tb_{B_{r}}|D w|^p \, dx 
	\\
	& \le C \, \tb_{B_{r}} |D u-Dw|^{p} \, dx +  C \omega(r)^{\tau_0 p} \tb_{B_{r}}|D u|^p \, dx,
	\end{align*}
	where we used \eqref{es23} in the second step and the fact that $\omega \le 1$ in the third step.
	Here $C = C(n,p,\Lambda)$.
		
It remains to bound the term 
\[
\tb_{B_{r}} |D u-Dw|^{p} \, dx,
\]
which can be done using Lemma \ref{111120149}.

This completes our proof.
\end{proof}

\subsection{Proof of Theorem \ref{grad1}}

We are now we are ready to prove Theorem \ref{grad1}.

\begin{pot}{grad1}
Let $B_R(x_0) \subset \Omega$.
Set $\epsilon \in (0,\sigma_0/2) \subset (0,1/4)$ be sufficiently small so that $
C_0\epsilon^{\kappa}\leq \frac{1}{4}$, where $C_0$, $\kappa$ and $\sigma_0$ are the constants given by Proposition \ref{intA}.

For all $j \in \N$ set 
\[
R_j=\epsilon^{j} R,
\quad
B_j=B_{2R_j}(x_0),
\quad \mathbf{I}_j=\mathbf{I}(R_j)
\quad \mbox{and} \quad
T_j:=\left(\tb_{B_{j}} |D u|^{p} dx\right)^{1/p}.
\]

It suffices to show that
\begin{equation}\label{maines}
\begin{aligned}
& |D u(x_0)| \\
&\le C \, T_{j_0} + C  \int_{0}^{2R_{1}}\left[\tb_{B_{\rho}(x_0)}|f-\beta|^{p'}dx + \rho^{p'}\left(\tb_{B_{\rho}(x_0)}|g|^{\frac{np}{np-n+p}}dx\right)^{\frac{np-n+p}{np-n}} \right] \, \frac{d\rho}{\rho},
\end{aligned}
\end{equation}
where $C = C(n, p,\Lambda,Z)$.

Let $\beta \in \R^n$.
Then Proposition \ref{intA} gives 
\begin{align*}
\mathbf{I}_{j+1}
& \leq \frac{1}{4}\mathbf{I}_{j} + C_1\left[\left(\tb_{B_{j}}|f-\beta|^{p'}dx\right)^{1/p} + R_j^{1/(p-1)}\left(\tb_{B_{j}}|g|^{\frac{np}{np-n+p}}dx\right)^{\frac{np-n+p}{(np-n)p}} \right]\\
& \quad  +  C_2 \left[\left(\tb_{B_{j}}|f-\beta|^{p'}dx\right)^{1/p'} + R_j\left(\tb_{B_{j}}|g|^{\frac{np}{np-n+p}}dx\right)^{\frac{np-n+p}{np}} \right] \, T_j^{2-p}
\\
& \quad + C_1 \, \omega(R_j)^{\tau_0} \, T_j,
\end{align*}
where $C_j = C_j(n,p,\Lambda,\epsilon)$ for $j \in \{1,2\}$.
Moreover, we may take $C_2 = 0$ in the case $p \in [2,n)$.

Let $j_0, m \in \N$ be such that $j_0 \ge 2$ and $m \ge j_0+1$, whose appropriate values will be chosen later.
Summing the above estimate up over $j\in \{j_0, j_0+1, \ldots,m-1\}$, we obtain 
\begin{equation}\label{z1}
\begin{aligned}
\sum_{j=j_0}^{m}\mathbf{I}_{j}
&\le \frac{4}{3} \, \mathbf{I}_{j_0}+ \frac{4}{3} \, C_1 \sum_{j=j_0}^{m-1}\left[\left(\tb_{B_{j}}|f-\beta|^{p'}dx\right)^{1/p} + R_j^{1/(p-1)}\left(\tb_{B_{j}}|g|^{\frac{np}{np-n+p}}dx\right)^{\frac{np-n+p}{(np-n)p}}  \right]\\
&\quad  + \frac{4}{3} \, C_2 \sum_{j=j_0}^{m-1}\left[\left(\tb_{B_{j}}|f-\beta|^{p'}dx\right)^{1/p'} + R_j\left(\tb_{B_{j}}|g|^{\frac{np}{np-n+p}}dx\right)^{\frac{np-n+p}{np}}\right] \, T_j^{2-p}
\\
& \quad + \frac{4}{3} \, C_1 \, \sum_{j=j_0}^{m-1}\omega(R_j)^{\tau_0} \, T_j.
\end{aligned}
\end{equation}
Observe that 
$$
\sum_{j=j_0}^{m}\mathbf{I}_{j}
\ge \epsilon^n \sum_{j=j_0}^{m}|\left(D u \right)_{B_{j+1}}-\left(D u\right)_{B_{j}}|
\ge \epsilon^n \, |\left(D u \right)_{B_{m+1}}-\left(D u \right)_{B_{j_0}}|.
$$
Therefore \eqref{z1} implies 
\begin{equation}\label{es1}
\begin{aligned}
&|\left(D u\right)_{B_{m+1}}|  \\
&\le \frac{4}{3} \, \epsilon^{-n} \, \mathbf{I}_{j_0}+ |\left(D u\right)_{B_{j_0}}|
\\
& \quad + \frac{4}{3} \, C_1 \, \epsilon^{-n}  \sum_{j=j_0}^{m-1}\left[\left(\tb_{B_{j}}|f-\beta|^{p'}dx\right)^{1/p} + R_j^{1/(p-1)}\left(\tb_{B_{j}}|g|^{\frac{np}{np-n+p}}dx\right)^{\frac{np-n+p}{(np-n)p}}  \right]\\
&\quad  + \frac{4}{3} \, C_2 \, \epsilon^{-n}  \sum_{j=j_0}^{m-1}\left[\left(\tb_{B_{j}}|f-\beta|^{p'}dx\right)^{1/p'} + R_j\left(\tb_{B_{j}}|g|^{\frac{np}{np-n+p}}dx\right)^{\frac{np-n+p}{np}} \right] \, T_j^{2-p}
\\
& \quad + \frac{4}{3} \, C_1 \, \epsilon^{-n}  \, \sum_{j=j_0}^{m-1}\omega(R_j)^{\tau_0} \, T_j.
\end{aligned}
\end{equation}	

Estimating between an integral and its partial sum reveals that
\begin{equation}\label{es0}
\begin{aligned}
& \sum_{j=j_0}^{m-1}\left[\left(\tb_{B_{j}}|f-\beta|^{p'}dx\right)^{1/p} + R_j^{1/(p-1)}\left(\tb_{B_{j}}|g|^{\frac{np}{np-n+p}}dx\right)^{\frac{np-n+p}{(np-n)p}} \right]\\
& \le \varepsilon^{-1}\int_{0}^{2R_{1}}\left[\left(\tb_{B_{\rho}(x_0)}|f-\beta|^{p'}dx\right)^{1/p} + \rho^{1/(p-1)}\left(\tb_{B_{\rho}(x_0)}|g|^{\frac{np}{np-n+p}}dx\right)^{\frac{np-n+p}{(np-n)p}} \right] \, \frac{d\rho}{\rho},
\end{aligned} 
\end{equation}

and 
\begin{equation}\label{es00}
\begin{aligned}
& \sum_{j=j_0}^{m-1}\left[\left(\tb_{B_{j}}|f-\beta|^{p'}dx\right)^{1/p'} + R_j\left(\tb_{B_{j}}|g|^{\frac{np}{np-n+p}}dx\right)^{\frac{np-n+p}{np}}  \right]\\
& \le  \varepsilon^{-1} \int_{0}^{2 R_{1}}\left[\left(\tb_{B_{\rho}(x_0)}|f-\beta|^{p'}dx\right)^{1/p'} + \rho\left(\tb_{B_{\rho}(x_0)}|g|^{\frac{np}{np-n+p}}dx\right)^{\frac{np-n+p}{np}} \right] \, \frac{d\rho}{\rho}.
\end{aligned} 
\end{equation}

In what follows, choose a $j_0 = j_0(\epsilon,C_1,Z,\Omega)$ such that 
\[
\frac{8}{3} \, C_1 \, \epsilon^{-2n} \sum_{j=j_0}^\infty \omega(R_j)^{\tau_0} < \frac{1}{10},
\]
where $C_1$ is given in \eqref{es1}.
Note that this choice is possible due to \eqref{Dini int}.

Now we consider three cases.

{\bf Case 1}: Suppose $|D u(x_0)|\leq  T_{j_0}$.
Then \eqref{maines} trivially follows.

{\bf Case 2}: Suppose there exists a $j_1 \in \N$ such that $j_1 \ge j_0$ and 
\begin{equation}\label{co1}
T_j\leq |D u(x_0)|
\quad \mbox{and} \quad 
|D u(x_0)|<T_{j_1+1}
\end{equation}
for all $j \in \{j_0, j_0+1,\ldots,j_1\}$.

Then
\begin{align*}
|D u(x_0)|& <	\left(\tb_{B_{j_1 +1}} |D u|^{p}dx\right)^{1/p} \leq \mathbf{I}_{j_1+ 1}+ |\left(D u\right)_{B_{j_1 +1}}| \leq  \epsilon^{-n} \, \left( \mathbf{I}_{j_1}+ |\left(D u\right)_{B_{j_1 +1}}| \right).
\end{align*}

Now applying \eqref{z1} and \eqref{es1} with $m=j_1$ and then using  \eqref{es0}, \eqref{es00} and \eqref{co1}
we derive 
\begin{align*}
& |D u(x_0)|\\
&\le \frac{8}{3} \, \epsilon^{-2n} \, \mathbf{I}_{j_0}+ \epsilon^{-2n} \, |\left(D u\right)_{B_{j_0}}|\\
& \quad + \frac{8}{3} \, C_1 \, \epsilon^{-2n-1}  \int_{0}^{2R_{1}}\left[\left(\tb_{B_{\rho}(x_0)}|f-\beta|^{p'}dx\right)^{1/p} + \rho^{1/(p-1)}\left(\tb_{B_{\rho}(x_0)}|g|^{\frac{np}{np-n+p}}dx\right)^{\frac{np-n+p}{(np-n)p}}  \right]\frac{d\rho}{\rho}\\
&\quad + \frac{8}{3} \, C_2 \, \epsilon^{-2n-1} \, |D u(x_0)|^{2-p}  \int_{0}^{2 R_{1}}\left[\left(\tb_{B_{\rho}(x_0)}|f-\beta|^{p'}dx\right)^{1/p'} + \rho\left(\tb_{B_{\rho}(x_0)}|g|^{\frac{np}{np-n+p}}dx\right)^{\frac{np-n+p}{np}}  \right] \, \frac{d\rho}{\rho} 
\\
& \quad + \frac{|D u(x_0)|}{10}.
\end{align*}
It remains to estimate the third term on the right-hand side of the above inequality.
Since $C_2 = 0$ when $p \in [2,n)$, we need only focus on $p \in (1,2)$.
In this case, it follows from Young's inequality that
\begin{align*}
|D u(x_0)|
&  \le \frac{8}{3} \, \epsilon^{-2n} \, \mathbf{I}_{j_0} + \epsilon^{-2n} \, |\left(D u\right)_{B_{j_0}}|+	\frac{1}{5}|D u(x_0)|\\
& \quad + C(n,p,\Lambda,\epsilon) \, \int_{0}^{2R_{1}}\left[\left(\tb_{B_{\rho}(x_0)}|f-\beta|^{p'}dx\right)^{1/p} + \rho^{1/(p-1)}\left(\tb_{B_{\rho}(x_0)}|g|^{\frac{np}{np-n+p}}dx\right)^{\frac{np-n+p}{(np-n)p}}  \right] \, \frac{d\rho}{\rho}.
\end{align*}

Either way we always have
\begin{align*}
|D u(x_0)|
&  \le 40 \epsilon^{-2n} \, T_{j_0} \\
& \quad {} + C(n,p,\Lambda,\epsilon)\int_{0}^{2R_{1}}\left[\tb_{B_{\rho}(x_0)}|f-\beta|^{p'}dx + \rho^{p'}\left(\tb_{B_{\rho}(x_0)}|g|^{\frac{np}{np-n+p}}dx\right)^{\frac{np-n+p}{np-n}} \right] \, \frac{d\rho}{\rho}
\end{align*}
which implies \eqref{maines} as desired.

{\bf Case 3}: Suppose $
T_j\leq |D u(x_0)|$ for all $j \in \{2,3,4,\ldots\}$.
Then we deduce from \eqref{es1}, \eqref{es0} and \eqref{es00} that 
\begin{align*}\nonumber
&|\left(D u\right)_{B_{k+1}}|\\
& \leq  \frac{4}{3} \, \epsilon^{-n} \, \mathbf{I}_{j_0}+|\left(D u\right)_{B_{j_0}}|\\
& \quad + \frac{4}{3} \, C_1 \, \epsilon^{-n-1}  \int_{0}^{2R_{1}}\left[\tb_{B_{\rho}(x_0)}|f-\beta|^{p'}dx + \rho^{p'}\left(\tb_{B_{\rho}(x_0)}|g|^{\frac{np}{np-n+p}}dx\right)^{\frac{np-n+p}{np-n}} \right]\frac{d\rho}{\rho}\\
&\quad + \frac{4}{3} \, C_2 \, \epsilon^{-n-1}  \, |D u(x_0)|^{2-p}  \int_{0}^{2 R_{1}}\left[\left(\tb_{B_{\rho}(x_0)}|f-\beta|^{p'}dx\right)^{p-1} + \rho^p\left(\tb_{B_{\rho}(x_0)}|g|^{\frac{np}{np-n+p}}dx\right)^{\frac{np-n+p}{n}} \right] \, \frac{d\rho}{\rho}
\\
& \quad + \frac{|D u(x_0)|}{10}
\end{align*}
for all $k \in \{2,3,4,\ldots\}$.

Simplifying the above estimate further and then letting $k\longrightarrow\infty$ we arrive at
\begin{align*}
& |D u(x_0)|\\
& \leq  \frac{4}{3} \, \epsilon^{-n}  \, \mathbf{I}_{j_0}+|\left(D u\right)_{B_{j_0}}|\\
& \quad + \frac{4}{3} \, C_1 \, \epsilon^{-n-1}  \int_{0}^{2R_{1}}\left[\tb_{B_{\rho}(x_0)}|f-\beta|^{p'}dx + \rho^{p'}\left(\tb_{B_{\rho}(x_0)}|g|^{\frac{np}{np-n+p}}dx\right)^{\frac{np-n+p}{np-n}} \right] \, \frac{d\rho}{\rho}\\
&\quad + \frac{4}{3} \, C_2 \, \epsilon^{-n-1} \, |D u(x_0)|^{2-p}  \int_{0}^{2 R_{1}}\left[\left(\tb_{B_{\rho}(x_0)}|f-\beta|^{p'}dx\right)^{p-1} + \rho^p\left(\tb_{B_{\rho}(x_0)}|g|^{\frac{np}{np-n+p}}dx\right)^{\frac{np-n+p}{n}} \right] \, \frac{d\rho}{\rho}\\
& \leq  4 \, \epsilon^{-n}  \, T_{j_0}\\
& \quad + \frac{4}{3} \, C_1 \, \epsilon^{-n-1}  \int_{0}^{2R_{1}}\left[\tb_{B_{\rho}(x_0)}|f-\beta|^{p'}dx + \rho^{p'}\left(\tb_{B_{\rho}(x_0)}|g|^{\frac{np}{np-n+p}}dx\right)^{\frac{np-n+p}{np-n}} \right] \, \frac{d\rho}{\rho}\\
&\quad + \frac{4}{3} \, C_2 \, \epsilon^{-n-1} \, |D u(x_0)|^{2-p}  \int_{0}^{2 R_{1}}\left[\left(\tb_{B_{\rho}(x_0)}|f-\beta|^{p'}dx\right)^{p-1} + \rho^p\left(\tb_{B_{\rho}(x_0)}|g|^{\frac{np}{np-n+p}}dx\right)^{\frac{np-n+p}{n}} \right] \, \frac{d\rho}{\rho}.
\end{align*}
Now \eqref{maines} follows after an application of Young's inequality as we did in the last part of Case 2. 

Thus the proof is complete.
\end{pot}

\section{Global pointwise gradient estimates}\label{global}

This section considers a boundary counterpart of Theorem \ref{grad1}.
The main task is to prove Theorem \ref{boundary}.

Before proceeding further with details, let us simplify the procedure of proving Theorem \ref{boundary} as follows.
By using a standard approximating procedure we may assume that $u\in C^1_0(\Omega)$ is a solution of \eqref{main}. 
Let $x_0 \in \Omega$.
Then Theorem \ref{grad1} states that there exists a $C = C(n,p,\Lambda,Z) > 0$ satisfying
\begin{align}\label{Wwithtail}
|D u(x_0)| &\leq C\, {\bf F}^{2\rm diam(\Omega)}_p(f,g)(x) + C\, \Big(\tb_{B_{d(x_0)}(x_0)}|D u(y)|^{p} dy\Big)^{\frac{1}{p}}.
\end{align}
Also we infer from Lemma \ref{111120149} that
\begin{equation}\label{int0}
\begin{aligned}
\tb_{\Om}|D u|^pdx
\leq C\left[\inf_{m \in \R^n} \tb_{\Omega}|f-m|^{p'}dx + \Big[{\rm diam(\Omega)}\Big]^{p'}\left(\tb_{\Omega}|g|^{\frac{np}{np-n+p}}dx\right)^{\frac{np-n+p}{np-n}} \right]
\end{aligned}
\end{equation}
with $C = C(n,p,\Lambda)$ > 0.

These two estimates together allow us to assume that $d(x_0)\leq r_1/2$ for some sufficiently small $r_1 \in (0,1)$. 
Recall that $\Omega$ is a $(\delta,R_0)$-Reifenberg flat domain for some $R_0>0$. 
Therefore we may assume further that 
\[d(x_0)
\le \frac{r_1}{2}
\le \frac{R_0}{100} 
\le \frac{{\rm diam}(\Omega)}{1000}.
\]  
This condition is implicitly understood for the rest of the paper.

\subsection{Estimates near the boundary}

The following notation is fixed in this whole section.
Let $x_1\in \partial\Omega$ be such that $|x_1-x_0|=d(x_0)$. 
Let $r\in (0, r_1]$.

Let $w\in W_{0}^{1,p}(\Omega_{2r}(x_1))+u$ be the unique solution 
to 
\[
\left\{ \begin{array}{rcl}
- \operatorname{div}\left( {\A(x,D w)} \right) &=& 0 \quad {\rm in}~~ \Omega_{2 r}(x_1), \\ 
w &=& u \quad {\rm on}~~\partial \Omega_{2r}(x_1), 
\end{array} \right.
\]
where we write  $\Omega_r(x_1)=\Omega\cap B_r(x_1)$.

Next let $v\in w+ W_0^{1,p}(\Omega_{r}(x_1))$ be the unique solution to
\begin{equation*}
\left\{ \begin{array}{rcl}
- \operatorname{div}\left( {\A(x_1,D v)} \right) &=& 0 \quad {\rm in} ~~\Omega_{r}(x_1), \\ 
v &=& w\quad  {\rm on}~~\partial \Omega_{r}(x_1). 
\end{array} \right.
\end{equation*}

In what follows, we always tacitly extend $u$ by zero to $\mathbb{R}^n\setminus \Omega$, whence in turn extend $w$ by $u$ to
$\mathbb{R}^n\setminus \Omega_{2r}(x_1)$ and $v$ by $w$ to
$\mathbb{R}^n\setminus \Omega_{r}(x_1)$.

In this subsection we collects several technical lemmas involving $w$ and $v$.

A slight modification in the proof of \eqref{es23} (to adapt the boundary case) verifies that 
\begin{equation} \label{diff bd}
\tb_{B_{r}(x_1)}|D v-D w|^{p} dx \leq C(n,p,\Lambda) \, \omega(r)^{\tau_0} \tb_{B_{r}(x_1)}|D w|^{p} dx.
\end{equation}

A boundary counterpart of \eqref{es29} is more involved and so we include a proof.
To this end, the following well-known property is essential, cf.\ \cite[Theorem 3]{W03}.

\begin{lem}
	\label{lem:mainlem}
	Let $0<\epsilon<1$ and  $B_R$ be a ball of radius $R$ in $\mathbb{R}^n$.  
	Let $E\subset F\subset B_R$ be two measurable sets with the following properties:
	\begin{enumerate}
		\item 	$|E|<\epsilon |B_R|$ and
		\item for all $x\in B_R$ and $\rho\in (0,R]$, if $|E\cap B_\rho(x)|\geq \epsilon |B_\rho(x)|$ then $B_\rho(x)\cap B_R\subset F$.
	\end{enumerate}
	Then there exists a constant $C=C(n)$ such that $|E|\leq C \epsilon |F|$.
\end{lem}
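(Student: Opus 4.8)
\textbf{Proof proposal for Lemma \ref{lem:mainlem}.}

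The plan is to run a Calder\'on--Zygmund / Vitali-type stopping-time argument. Since $|E| < \epsilon |B_R|$, the average density of $E$ on the whole ball $B_R$ is below the threshold $\epsilon$. For each point $x \in E$ (up to a null set, via the Lebesgue density theorem) the density of $E$ on small balls $B_\rho(x)$ tends to $1$, hence exceeds $\epsilon$; so for a.e.\ $x \in E$ there is a \emph{largest} radius $\rho_x \in (0,R]$ for which $|E \cap B_{\rho_x}(x)| = \epsilon |B_{\rho_x}(x)|$ (using continuity of $\rho \mapsto |E \cap B_\rho(x)|$ and the fact that at scale $R$ the density is $<\epsilon$ while at small scales it is close to $1$). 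By the stopping property, every strictly larger radius gives density $< \epsilon$, while at $\rho_x$ itself the density is exactly $\epsilon$, so hypothesis (2) applies at $(x,\rho_x)$: $B_{\rho_x}(x) \cap B_R \subset F$.

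Next I would extract from the family $\{B_{\rho_x}(x)\}_{x \in E}$ (a.e.) a countable Vitali-type subcover. Applying the basic Vitali covering lemma yields a countable pairwise-disjoint subfamily $\{B_{\rho_{x_i}}(x_i)\}_{i}$ such that $E \subset \bigcup_i B_{5\rho_{x_i}}(x_i)$ up to a null set. Each stopping ball satisfies $B_{\rho_{x_i}}(x_i) \cap B_R \subset F$ and $|E \cap B_{\rho_{x_i}}(x_i)| = \epsilon |B_{\rho_{x_i}}(x_i)|$. Then estimate
\[
|E| \le \sum_i |B_{5\rho_{x_i}}(x_i)| = 5^n \sum_i |B_{\rho_{x_i}}(x_i)| = \frac{5^n}{\epsilon} \sum_i |E \cap B_{\rho_{x_i}}(x_i)| \le \frac{5^n}{\epsilon}\, |E|,
\]
which is the wrong direction — so instead one bounds $|F|$ from below: by disjointness and the inclusion $B_{\rho_{x_i}}(x_i)\cap B_R \subset F$,
\[
|F| \ge \Big| \bigcup_i \big(B_{\rho_{x_i}}(x_i) \cap B_R\big) \Big| = \sum_i |B_{\rho_{x_i}}(x_i) \cap B_R| \ge c(n) \sum_i |B_{\rho_{x_i}}(x_i)|,
\]
where the last inequality needs a lower bound $|B_{\rho_{x_i}}(x_i) \cap B_R| \ge c(n) |B_{\rho_{x_i}}(x_i)|$; this holds because $x_i \in B_R$ and $\rho_{x_i} \le R$, so at least a fixed fraction of $B_{\rho_{x_i}}(x_i)$ lies inside $B_R$. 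Combining with $|E| \le 5^n \sum_i |B_{\rho_{x_i}}(x_i)|$ and $\sum_i |B_{\rho_{x_i}}(x_i)| = \epsilon^{-1}\sum_i|E\cap B_{\rho_{x_i}}(x_i)|$, one gets $|E| \le 5^n \epsilon^{-1} |E|$ again; the correct chaining is $|E| \le 5^n \sum_i |B_{\rho_{x_i}}(x_i)| \le 5^n c(n)^{-1} |F|$, which is still missing the factor $\epsilon$. The factor $\epsilon$ comes back in only if we use the density identity to compare $\sum_i |B_{\rho_{x_i}}(x_i)|$ with $\epsilon^{-1}|E \cap \bigcup_i B_{\rho_{x_i}}(x_i)| \le \epsilon^{-1}|E|$ on one side and with $c(n)^{-1}|F|$ on the other; chaining these gives $|E \cap \bigcup_i B_{\rho_{x_i}}(x_i)| = \epsilon \sum_i|B_{\rho_{x_i}}(x_i)| \le \epsilon\, c(n)^{-1}|F|$, and then a standard iteration over dyadic scales (or the observation that $E$ is covered up to null sets by the $5\rho_{x_i}$-dilates, together with applying the argument on each sub-ball) upgrades this to the clean bound $|E| \le C(n)\epsilon |F|$.

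The technical heart — and the step I expect to be the main obstacle to write cleanly — is exactly this last bookkeeping: naively one only controls $E$ on the union of the \emph{stopping balls} themselves (where the density identity $=\epsilon$ is available), not on their $5$-dilates, so getting the factor $\epsilon$ rather than just a dimensional constant requires either (i) a more careful selection (e.g.\ the Besicovitch covering lemma to get bounded overlap with dilation factor $1$ instead of $5$, so that the dilates are the balls themselves), or (ii) a layer-cake / iteration argument that re-applies the $\epsilon$-density reduction. I would follow the Besicovitch route: replace Vitali by Besicovitch to obtain a countable subfamily $\{B_{\rho_{x_i}}(x_i)\}$ with $E \subset \bigcup_i B_{\rho_{x_i}}(x_i)$ (a.e.) and bounded overlap $\sum_i \mathbf{1}_{B_{\rho_{x_i}}(x_i)} \le C(n)$. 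Then $|E| = \big|\bigcup_i E \cap B_{\rho_{x_i}}(x_i)\big| \le \sum_i |E \cap B_{\rho_{x_i}}(x_i)| = \epsilon \sum_i |B_{\rho_{x_i}}(x_i)| \le \epsilon\, C(n) \sum_i |B_{\rho_{x_i}}(x_i) \cap B_R| / c(n) \le \epsilon\, C(n) c(n)^{-1}\, C(n)\, |F|$, using bounded overlap once more together with $B_{\rho_{x_i}}(x_i)\cap B_R \subset F$. This yields $|E| \le C(n)\,\epsilon\,|F|$ as claimed. Everything else — the existence of the stopping radius, measurability, the null-set caveats from Lebesgue density — is routine.
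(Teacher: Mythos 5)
The paper does not supply a proof of Lemma \ref{lem:mainlem}; it records it as a known covering lemma and cites \cite[Theorem 3]{W03}, so there is no in-paper argument to compare yours against.

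On its own terms, your Besicovitch version is correct. For a.e.\ $x\in E$ the density $\rho\mapsto|E\cap B_\rho(x)|/|B_\rho(x)|$ tends to $1$ as $\rho\downarrow 0$ and is $<\epsilon$ at $\rho=R$, since $|E\cap B_R(x)|\le|E|<\epsilon|B_R|=\epsilon|B_R(x)|$; hence there is a $\rho_x\in(0,R)$ with density exactly $\epsilon$, and hypothesis (2) gives $B_{\rho_x}(x)\cap B_R\subset F$. A Besicovitch subfamily $\{B_{\rho_{x_i}}(x_i)\}$ covering $E$ a.e.\ with overlap $\le N(n)$ then yields
\[
|E|\le\sum_i|E\cap B_{\rho_{x_i}}(x_i)|=\epsilon\sum_i|B_{\rho_{x_i}}(x_i)|\le\epsilon\,c(n)^{-1}\sum_i|B_{\rho_{x_i}}(x_i)\cap B_R|\le\epsilon\,c(n)^{-1}N(n)|F|,
\]
where $c(n)$ lower-bounds $|B_\rho(x)\cap B_R|/|B_\rho(x)|$ for $x\in B_R$, $\rho\le R$. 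Your displayed chain carries one stray extra $C(n)$, but the substance is right.

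What you label ``the technical heart'' is, however, a missed observation rather than an actual obstacle, and the Vitali route works with no iteration and no Besicovitch. Take $\rho_x$ to be the \emph{largest} radius in $(0,R]$ with density $\ge\epsilon$, exactly as you set up. Then $|E\cap B_\rho(x)|<\epsilon|B_\rho(x)|$ for every $\rho\in(\rho_x,R]$, and also for $\rho>R$ since $|E\cap B_\rho(x)|\le|E|<\epsilon|B_R|\le\epsilon|B_\rho(x)|$. In particular $|E\cap B_{5\rho_{x_i}}(x_i)|<\epsilon\cdot 5^n|B_{\rho_{x_i}}(x_i)|$ for the Vitali-selected pairwise disjoint balls, so
\[
|E|\le\sum_i|E\cap B_{5\rho_{x_i}}(x_i)|<5^n\epsilon\sum_i|B_{\rho_{x_i}}(x_i)|\le 5^n\epsilon\,c(n)^{-1}\sum_i|B_{\rho_{x_i}}(x_i)\cap B_R|\le 5^n\epsilon\,c(n)^{-1}|F|,
\]
the last step by disjointness and $B_{\rho_{x_i}}(x_i)\cap B_R\subset F$. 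Your initial Vitali computation only went wrong because you replaced $|E\cap B_{5\rho_{x_i}}(x_i)|$ by the full measure $|B_{5\rho_{x_i}}(x_i)|$ instead of invoking the $<\epsilon$ density that your own maximal choice of stopping radius guarantees at every scale $>\rho_{x_i}$; the ``iteration over dyadic scales'' you gesture at is not needed.
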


The boundary counterpart of \eqref{es29} is as follows. 
Observe that the estimate appears in an integral form as opposed to the pointwise estimate in \eqref{es29}.
This compensates the possible irregularity which may occur to the boundary of $\Omega$.

\begin{lem} \label{Gehring bdry}
	Let $q>p$. 		
	There exists a $\delta=\delta(q)>0$ such that if $\Omega$ is a  $(\delta,R_0)$-Reifenberg flat domain then
	\begin{equation}\label{z8}
	\left(	\tb_{B_{r/800}(x_1)}|D v|^qdx\right)^{1/q}\leq  C\left(	\tb_{B_{r}(x_1)}|D v|^{p}dx\right)^{\frac{1}{p}},
	\end{equation}
	where $C = C(n,p,\Lambda) > 0$.	
	In particular, 
	$$
	\tb_{B_{\epsilon r}(x_1)}|D v|^{p}dx\leq  C\epsilon^{-\frac{pn}{q}}	\tb_{B_{r}(x_1)}|D v|^{p}dx
	$$
	for all $\epsilon\in (0,1/800)$.
\end{lem}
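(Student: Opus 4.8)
The plan is to establish the reverse Hölder (Gehring-type) inequality \eqref{z8} by combining the boundary Lipschitz/sup bound for $v$ in the interior of $\Omega_r(x_1)$ with a covering argument controlled by Lemma \ref{lem:mainlem}, and then to deduce the displayed $\epsilon$-scaling consequence by an elementary iteration. First I would record that, since $\Omega$ is $(\delta,R_0)$-Reifenberg flat with $\delta$ small (to be fixed depending on $q$), the domain $\Omega_{2r}(x_1)$ has the measure density property, so that standard boundary regularity for the homogeneous equation $\operatorname{div}\A(x_1,Dv)=0$ with zero boundary data on the flat portion yields a local boundedness estimate of the form
\[
\sup_{\Omega_{\rho/2}(y)}|Dv| \le C\left(\tb_{B_\rho(y)}|Dv|^p\,dx\right)^{1/p}
\]
for balls $B_\rho(y)$ centered at boundary points, and the interior estimate \eqref{es29} for balls well inside $\Omega_{r}(x_1)$; here one uses the zero extension of $v$ outside $\Omega_r(x_1)$, as fixed in the text.

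The core of the argument is a level-set / good-$\lambda$ decomposition. For $\lambda$ above a suitable threshold $\lambda_0 \sim \left(\tb_{B_r(x_1)}|Dv|^p\right)^{1/p}$, set
\[
E_\lambda = \{x \in B_{r/800}(x_1): \mathcal{M}(|Dv|^p)(x) > \lambda^p\},
\]
a truncated maximal function relative to $B_{r/400}(x_1)$. Using the two sup-estimates above together with a Vitali/Calderón–Zygmund stopping-time selection of balls $B_\rho(x)$ where the average of $|Dv|^p$ first exceeds $\lambda^p$, one checks the two hypotheses of Lemma \ref{lem:mainlem}: condition (1), $|E_\lambda| < \epsilon|B_R|$, follows from weak-type $(1,1)$ for $\mathcal{M}$ once $\lambda$ is large; condition (2) is exactly the statement that if the average of $|Dv|^p$ on $B_\rho(x)\cap B_R$ is comparable to $\lambda^p$ then, by the sup bounds, $|Dv|^p \lesssim \lambda^p$ pointwise on a fixed dilate, i.e.\ that ball lies in the larger level set $F = \{\mathcal{M}(|Dv|^p) > (\lambda/A)^p\}$ for a fixed constant $A$. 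Lemma \ref{lem:mainlem} then gives $|E_\lambda| \le C\epsilon|F_{\lambda/A}|$, and integrating $\lambda^{q-p}$ against this distributional inequality from $\lambda_0$ to $\infty$ — choosing $\epsilon$ (hence $\delta$) small enough that $C\epsilon A^{q-p} < 1/2$ — absorbs the tail and produces
\[
\tb_{B_{r/800}(x_1)}|Dv|^q\,dx \le C\left(\tb_{B_r(x_1)}|Dv|^p\,dx\right)^{q/p},
\]
which is \eqref{z8} after taking $q$-th roots.

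For the ``in particular'' clause, I would argue as follows: \eqref{z8} at a general concentric scale, combined with a standard covering of $B_{\epsilon r}(x_1)$ and Hölder's inequality, gives for $\epsilon \in (0,1/800)$
\[
\tb_{B_{\epsilon r}(x_1)}|Dv|^p\,dx \le \frac{|B_{r/800}(x_1)|}{|B_{\epsilon r}(x_1)|}\left(\tb_{B_{r/800}(x_1)}|Dv|^q\,dx\right)^{p/q} \le C\,\epsilon^{-pn/q}\tb_{B_r(x_1)}|Dv|^p\,dx,
\]
where the factor $\epsilon^{-n}$ comes from the volume ratio and is improved to $\epsilon^{-pn/q}$ by the higher integrability exponent $q>p$; more precisely one first passes from $B_{\epsilon r}$ up to the comparable ball $B_{r/800}$ via the volume ratio and then applies \eqref{z8}. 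The main obstacle I anticipate is verifying hypothesis (2) of Lemma \ref{lem:mainlem} uniformly in the scale $\rho$ and across both interior balls and boundary-touching balls: this requires the boundary sup-estimate to hold with a constant independent of how close $B_\rho(x)$ is to $\partial\Omega$, which is precisely where the Reifenberg flatness (with $\delta=\delta(q)$ chosen small) and the zero-extension of $v$ enter; getting the bookkeeping of constants right so that the smallness of $\epsilon$ needed in Lemma \ref{lem:mainlem} translates into an admissible choice of $\delta$ depending only on $q$ (and $n,p,\Lambda$) is the delicate point.
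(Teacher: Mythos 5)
There is a genuine gap at the heart of the boundary case. You assert a boundary Lipschitz bound of the form
\[
\sup_{\Omega_{\rho/2}(y)}|Dv| \le C\left(\tb_{B_\rho(y)}|Dv|^p\,dx\right)^{1/p}
\]
for balls centered at $\partial\Omega$, and you then use it in condition (2) of Lemma~\ref{lem:mainlem} to conclude that $|Dv|^p\lesssim\lambda^p$ pointwise on a dilate of a stopping ball. But no such sup-estimate for $Dv$ holds up to the boundary of a merely $(\delta,R_0)$-Reifenberg flat domain: Reifenberg flatness does not give $W^{1,\infty}$ regularity for the homogeneous boundary-value problem, and in fact the gradient of $v$ need not be locally bounded near $\partial\Omega$. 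This is not a bookkeeping issue, as you suggest in the final paragraph; it is the reason the boundary case requires a different mechanism. What the paper actually does (following Mengesha--Phuc, cited as \cite[Theorem~2.9]{MP13}) is construct, for each $\eta>0$ and for $\delta\le\delta_1(\eta)$, a Lipschitz comparison function $\tilde{v}$ satisfying the \emph{two} estimates $\|D\tilde v\|_{L^\infty}\lesssim(\tb|Dv|^p)^{1/p}$ and $(\tb|D(v-\tilde v)|^p)^{1/p}\le\eta(\tb|Dv|^p)^{1/p}$. On a boundary-touching stopping ball the level set $E_{\Lambda_0\lambda}$ is then bounded \emph{in measure} via the weak-$(1,1)$ maximal inequality applied to $|D(v-\tilde v)|^p$, not via a pointwise bound on $|Dv|$. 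Without this approximation step your verification of hypothesis~(2) in Case~2 collapses.

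The remaining architecture of your proposal — the truncated maximal-function level sets, the two-case split between interior and boundary-touching stopping balls, the invocation of Lemma~\ref{lem:mainlem}, the integration of the distributional inequality with absorption after choosing $\epsilon$ small, and the derivation of the ``in particular'' clause from \eqref{z8} via Hölder and the volume ratio — does match the paper's proof. (Minor point: in the ``in particular'' computation the volume ratio $|B_{r/800}|/|B_{\epsilon r}|$ should carry exponent $p/q$ before combining with the average, though you land on the correct final power $\epsilon^{-pn/q}$.) So the fix is local: replace the nonexistent boundary sup-estimate for $Dv$ by the Lipschitz approximation and the weak-type bound on the correction term, and the rest of the argument goes through as you outlined.
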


\begin{proof} 
	The argument presented here is inspired by \cite[Proof of Lemma 4.2]{NP3}, also see \cite{N1}, \cite{N2},\cite{NP1},\cite{NP4}.
	
	Assume that $\Omega$ is a $(\delta,R_0)$-Reifenberg flat domain.
	We divide the proof into two steps.
	
	\noindent	
	{\bf Step 1}:  Let ${\bf M}$ be the standard Hardy-Littlewood maximal function and write $\mathbf{1}_E$ to denote the characteristic function of a set $E$. Set $\rho=r/800$ and  for  $\lambda>0$ let 
	$$E_{\lambda}=\left\{z\in \Omega: \left[{\bf M}\big(\mathbf{1}_{B_{8\rho}(x_1)}|D v|^{p}\big)(z)\right]^{1/p}>\lambda\right\}\cap B_{\rho}(x_1).$$

	Our task here is to show that for all $\epsilon>0$ there exist constants 
	\[
	\delta_1=\delta_1(n,p,\Lambda,\epsilon)\in (0,1),
	\quad 
	\delta_2=\delta_2(n,p,\Lambda,\epsilon)\in (0,1)
	\quad \mbox{and} \quad
	\Lambda_0=\Lambda_0(n,p,\Lambda)>1
	\] 
	such that if $\delta\leq \delta_1$ then 
	\begin{equation}|E_{\Lambda_0\lambda}|\leq C(n) \epsilon |E_{\lambda}|\label{lam-good}
	\end{equation}
	for all
	$$
	\lambda 
	\ge 
	T_0:=\delta_2^{-1}\left(	\tb_{B_{800\rho}(x_1)} |D v|^{p}dx\right)^{\frac{1}{p}}.
	$$ 
	
	To this end let $\epsilon > 0$.
	Note that ${\bf M} : L^1(\mathbb{R}^{n})\longrightarrow L^{1,\infty}(\mathbb{R}^{n})$ is bounded.
	Therefore 
	\begin{align}\label{5hh2310131}
	\left|E_{\Lambda_0\lambda}\right|\leq \frac{C(n)}{(\Lambda_0\lambda)^{p}}\int_{B_{8\rho}(x_1)} |D v|^{p}dx
	\leq C(n)\left(\frac{\delta_2}{\Lambda_0}\right)^{p}|B_{800\rho}(x_1)|\leq \epsilon \left|B_\rho(x_1)\right|
	\end{align}
	for all $\lambda\geq T_0$, provided that $\delta_2\leq \left(\frac{800^{-n}\epsilon}{C(n)}\right)^{1/p}\Lambda_0$.

	Next we need to verify 
	\begin{equation}\label{2nd-check}
	\left|E_{\Lambda_0\lambda}\cap B_{\rho_1}(x)\right|\geq \epsilon \left|B_{\rho_1}(x)\right| 
	\quad \Longrightarrow \quad 
	B_{\rho_1}(x)\cap B_\rho(x_1)\subset E_\lambda
	\end{equation}
	for all $x\in B_{\rho}(x_1)$, $\rho_1\in(0,\rho]$ and $\lambda\geq  T_0$, provided that $\delta$ and $\delta_2$, which depend on $n$, $p$, $\Lambda$ and $\epsilon$, are sufficiently small.
	
	Once this is done, \eqref{lam-good} will follow by using \eqref{5hh2310131}, \eqref{2nd-check} in combination with Lemma \ref{lem:mainlem} whose $E := E_{\Lambda_0\lambda}$ and $F := E_\lambda$.
	
	Now we prove \eqref{2nd-check} by contraposition.
	Take $x\in B_{\rho}(x_1)$, $\rho_1\in (0,\rho]$ and $\lambda\geq  T_0$.
	Assume that 
	$$B_{\rho_1}(x)\cap B_\rho(x_1)\cap (E_\lambda)^c\not= \emptyset.$$  
	That is, there exists an $x_2\in B_{\rho_1}(x)\cap B_\rho(x_1)$ such that 
	\begin{equation}\label{x2lambda}
	\left[{\bf M}\big(\mathbf{1}_{B_{8\rho}(x_1)}|D v|^{p}\big)(x_2)\right]^{1/p}\leq \lambda.
	\end{equation}	
	
	We aim to obtain
	\begin{equation}\label{5hh2310133}
	\left|E_{\Lambda_0\lambda}\cap B_{\rho_1}(x)\right|<  \epsilon \left|B_{\rho_1}(x)\right|. 
	\end{equation}
	
	Clearly,
	$$
	\left[{\bf M}\big(\mathbf{1}_{B_{8\rho}(x_1)}|D v|^{p}\big)(y)\right]^{1/p}\leq \max\left\{\left[{\bf M}\big(\mathbf{1}_{B_{2\rho_1}(x)}|D v|^{p}\big)(y)\right]^{\frac{1}{p}},3^{\frac{n}{p}}\lambda\right\}$$
	for all $y\in B_{\rho_1}(x)$.
	
	It follows that
	\[
	E_{\Lambda_0\lambda}\cap B_{\rho_1}(x)\subset\left\{z\in \Omega: \left[{\bf M}\big(\mathbf{1}_{ B_{2\rho_1}(x)}|D v|^{p}\big)(z)\right]^{\frac{1}{p}}>\Lambda_0\lambda\right\}\cap B_{\rho}(x_1) \cap B_{\rho_1}(x)
	\]
	for all $\lambda\geq T_0$ and $\Lambda_0\geq 3^{\frac{n}{p}}$.
	
	We consider two cases.
	
	\vspace{3mm}
	{\bf Case 1}: Suppose $B_{4\rho_1}(x)\Subset \Omega$.
	
	Since
	$$
	\operatorname{div}\left( {\A(x_1,D v)} \right) = 0
	\quad \mbox{in } B_{4\rho_1}(x),
	$$
	it follows from \eqref{es29} that
	$$
	\|D v\|_{L^\infty \left(B_{3\rho_1}(x)\right)}
	\leq C_1 \left( \tb_{B_{4\rho_1}(x)} |D v|^{p}dx\right)^{\frac{1}{p}}
	\leq C_1 \left(	\tb_{B_{5\rho_1}(x_2)} |D v|^{p}dx\right)^{\frac{1}{p}},
	$$
	where $C_1 = C_1(n,p,\Lambda) > 0$.
	This estimate together with \eqref{x2lambda} give
	$$
	\|D v\|_{L^\infty(B_{3\rho_1}(x))}\le C_1\lambda.
	$$
In particular, if $\Lambda_0\geq \max\left\{3^{\frac{n}{p}}, 4 C_1\right\}$ then
	$$\|D v\|_{L^\infty(B_{3\rho_1}(x))}\leq \frac{1}{2}\Lambda_0\lambda$$ and so
	\[
	E_{\Lambda_0\lambda} = \emptyset.
	\]
	
Hence \eqref{5hh2310133} is vacuously true.
	
	\vspace{3mm}
	{\bf Case 2}: Suppose $\overline{B_{4\rho_1}(x)}\cap\Omega^{c}\not=\emptyset$. 
	
	Let $x_3\in\partial \Omega$ be such that $|x_3-x|=\text{dist}(x,\partial\Omega)$.  We have 
	$$B_{2\rho_1}(x)\subset B_{6\rho_1}(x_3)\subset B_{600\rho_1}(x_3)\subset B_{605\rho_1}(x_2).$$
	Thanks to  \cite[Theorem 2.9]{MP13},	for any $\eta>0$ there exists $\delta_1=\delta_1(n,p,\Lambda,\eta)$ be such that the following holds. If $\delta\leq \delta_1$ then there exists a function $\tilde{v}\in W^{1,\infty}(B_{6\rho_1}(x_3))$  such that 
	$$
	\|D \tilde{v}\|_{L^\infty(B_{6\rho_1}(x_3))}\leq C_2 \left(\tb_{B_{600\rho_1}(x_3)}|D v|^{p}dx\right)^{1/p},
	$$
	where $C_2 = C_2(n,p,\Lambda) > 0$ and		
	$$
	\left(\tb_{B_{6\rho_1}(x_3)}|D (v-\tilde{v})|^{p}dx\right)^{\frac{1}{p}}\leq  \eta\left(\tb_{B_{600\rho_1}(x_3)}|D v|^{p}dx\right)^{1/p}.
	$$
	
	Note that if $\rho_1\leq \rho/100$, then 
	$$\left(\tb_{B_{600\rho_1}(x_3)}|D v|^{p}dx\right)^{1/p}\leq 2^{\frac{n}{p}}\left[{\bf M}\big(\mathbf{1}_{B_{8\rho}(x_1)}|D v|^{p}\big)(x_2)\right]^{1/p}\leq 2^{\frac{n}{p}+1}\lambda,$$
	and if $\rho_1\geq  \rho/100$, then since $\rho_1\leq \rho$,
	$$\left(\tb_{B_{600\rho_1}(x_3)}|D v|^{p}dx\right)^{1/p}\leq10^{\frac{3n}{p}}\left(\tb_{B_{800\rho}(x_1)}|D v|^{p}dx\right)^{1/p}\leq 10^{\frac{3n}{p}} \delta_2\lambda. $$
	
	Hence, 
	$$
	\|D \tilde{v}\|_{L^\infty\left(B_{2\rho_1}(x)\right)}\leq 10^{\frac{3n}{p}}C_2 \lambda,$$
	and
	$$\left(\tb_{B_{2\rho_1}(x)}|D (v-\tilde{v})|^{p}dx\right)^{\frac{1}{p}}\leq 10^{\frac{4n}{p}}\eta \lambda.$$
	
	Choosing $\Lambda_0= \max\left\{3^{\frac{n}{p}},4 C_1, 2^{\frac{1}{p}} 10^{\frac{3n}{p}} C_2\right\}$, we derive 
	\begin{align*}
	|E_{\Lambda_0\lambda}\cap B_{\rho_1}(x)|&\leq \left|\left\{\left[{\bf M}\big(\mathbf{1}_{ B_{2\rho_1}(x)}|D (v-\tilde{v})|^{p}\big)\right]^{\frac{1}{p}}>2^{-\frac{1}{p}}\Lambda_0\lambda\right\}\right|\\&\leq  \frac{C(n)}{\left(2^{-\frac{1}{p}}\Lambda_0\lambda\right)^{p}} \int_{B_{2\rho_1}(x)}|D (v-\tilde{v})|^{p}
	\\&\leq  \frac{2 C(n)}{\left(\Lambda_0\lambda\right)^{p}} \left(10^{\frac{4n}{p}}\eta \lambda\right)^{p}|B_{2\rho_1}(x)|\\&< \epsilon \left|B_{\rho_1}(x)\right|,
	\end{align*}
	for 
	$$
	\eta = \left(\frac{\epsilon}{10^{5n}C(n)}\right)^{1/p}.
	$$ 
	This gives \eqref{5hh2310133}.

	\noindent
	{\bf Step 2}: We will prove \eqref{z8}.
	
	Set $\lambda_0=\Lambda_0 T_0$.
	Thanks to  \eqref{lam-good},
	\begin{equation} \label{close}
	\begin{aligned} 
	&\int_{B_{\rho}(x_1)}	\left[{\bf M}\big(\mathbf{1}_{B_{8\rho}(x_1)}|D v|^{p}\big)\right]^{q/p} dx
	\\
	&= q\int_{0}^{\infty} \lambda^{q-1} \, \left|\left\{ \left[{\bf M}\big(\mathbf{1}_{B_{8\rho}(x_1)}|D v|^{p}\big)\right]^{1/p}>\lambda\right\}\cap B_{\rho}(x_1)\right| \, d\lambda
	\\ 
	& \leq  q\int_{0}^{\lambda_0} \lambda^{q-1} \, |B_{\rho}(x_1)| \, d\lambda
	\\
	& \quad + 
	C(n) \, q\epsilon\int_{\lambda_0}^{\infty}\lambda^{q-1} \, \left|\left\{\left[{\bf M}\big(\mathbf{1}_{B_{8\rho}(x_1)}|D v|^{p}\big)\right]^{1/p}>\lambda/\Lambda_0\right\}\cap B_{\rho}(x_1)\right| \, d\lambda
	\\
	&\leq  \lambda_0^{q} \, |B_{\rho}(x_1)| 
	+ C(n) \, \Lambda_0^q\epsilon \int_{B_{\rho}(x_1)}	\left[{\bf M}\big(\mathbf{1}_{B_{8\rho}(x_1)}|D v|^{p}\big)\right]^{q/p} dx.
	\end{aligned}
	\end{equation}
	A limiting argument then justifies that 
	$$
	\int_{B_{\rho}(x_1)} \left[{\bf M}\big(\mathbf{1}_{B_{8\rho}(x_1)}|D v|^{p}\big)\right]^{q/p} dx < \infty.
	$$ 
	With this in mind, we choose $\epsilon=\frac{1}{2 C(n)\Lambda_0^q}$ in \eqref{close} to obtain
	\begin{equation*}
	\tb_{B_{\rho}(x_1)}|D v|^{q}dx
	\le \lambda_0^q
	= \Lambda_0^q \, T_0^q 
	=  \Lambda_0^q \, \delta_2^{-q} \, \left(	\tb_{B_{800\rho}(x_1)} |D v|^{p}dx\right)^{\frac{q}{p}}
	=  \Lambda_0^q \, \delta_2^{-q} \, \left(	\tb_{B_{r}(x_1)} |D v|^{p}dx\right)^{\frac{q}{p}},
	\end{equation*}
	where we used $\rho=r/800$ in the last step.
	
	Thus \eqref{z8} follows with $C = C(n,p,\Lambda) = \Lambda_0 \, \delta_2^{-1}$.
\end{proof}

A comparison estimate up to boundary is stated below.
This provides an analogue of Lemma \ref{111120149}.
The proof of this statement is similar to that of Lemma \ref{111120149} and hence is omitted.

\begin{lem}\label{111120149+} 
Let $u \in W_{0}^{1,p}(\Omega)$ be a weak solution of \eqref{main} and let  $w$ be as in \eqref{thuannhat1}. Then there are constants $C = C(n,p) > 0$ and $C' = C'(n,p) > 0$ such that
	\begin{align*}
	& \tb_{B_{2r}(x_1)}|Du-Dw|^pdx\\
	& \leq C \, \left[\inf_{m \in \R^n} \tb_{B_{2r}(x_1)}|f-m|^{p'}dx + r^{p'}\left(\tb_{B_{2r}(x_1)}|g|^{\frac{np}{np-n+p}}dx\right)^{\frac{np-n+p}{np-n}} \right]\\
	& \,\, +  C' \, \left[\inf_{m \in \R^n} \left(\tb_{B_{2r}(x_1)}|f-m|^{p'}dx\right)^{p-1} + r^p\left(\tb_{B_{2r}(x_1)}|g|^{\frac{np}{np-n+p}}dx\right)^{\frac{np-n+p}{n}} \right]  \left(\tb_{B_{2r}(x_1)}|D u|^{p}dx\right)^{2-p}.
	\end{align*}
\end{lem}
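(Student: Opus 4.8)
The plan is to mimic the proof of Lemma \ref{111120149} verbatim, only replacing the interior ball $B_{2r}$ by the truncated ball $\Omega_{2r}(x_1)=\Omega\cap B_{2r}(x_1)$, and checking that each ingredient still holds in this boundary setting. First I would use the weak formulation of \eqref{main} and of the equation defining $w$: since $u-w\in W_0^{1,p}(\Omega_{2r}(x_1))$ (after the zero extension of $u$ outside $\Omega$ and of $w$ outside $\Omega_{2r}(x_1)$), it is a legitimate test function, and subtracting the two weak formulations gives, for any $m\in\R^n$,
\begin{equation*}
\int_{\Omega_{2r}(x_1)}\big[\A(x,Du)-\A(x,Dw)\big]\cdot D(u-w)\,dx
=\int_{\Omega_{2r}(x_1)}(f-m)\cdot D(u-w)\,dx+\int_{\Omega}g\,(u-w)\,dx,
\end{equation*}
exactly as in \eqref{t1}. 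The constant vector $m$ drops out of the $f$-term because $D(u-w)$ has zero mean on a ball only in the interior case; here it drops out instead because $u-w$ vanishes on $\partial\Omega_{2r}(x_1)$, so $\int (f-m)\cdot D(u-w)=\int f\cdot D(u-w)$ regardless of $m$, and one simply keeps $m$ free for the final infimum.

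Next I would apply the monotonicity inequality \eqref{mono} to the left side, and Hölder together with the Sobolev inequality $\big(\dashint_{B_{2r}(x_1)}|u-w|^{np/(n-p)}\big)^{(n-p)/(np)}\le C(n,p)\,r\,\big(\dashint_{B_{2r}(x_1)}|D(u-w)|^p\big)^{1/p}$ to the right side — here I use the zero extension so that $u-w\in W_0^{1,p}(B_{2r}(x_1))$ and the full-ball Sobolev–Poincaré inequality applies, with the measure of $\Omega_{2r}(x_1)$ comparable to that of $B_{2r}(x_1)$ thanks to Reifenberg flatness (or, more simply, the one-sided inclusion already gives $|\Omega_{2r}(x_1)|\ge c|B_{2r}(x_1)|$ for $\delta$ small, which is all that is needed to pass between averages over the two sets). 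This reproduces \eqref{t9} with $B_{2r}$ replaced by $\Omega_{2r}(x_1)$ on the left and by $B_{2r}(x_1)$ on the right in the data terms. From there the case split is identical: for $p\ge 2$ one absorbs $\big(\dashint|D(u-w)|^p\big)^{1/p}$ by Young's inequality; for $p\in(1,2)$ one uses the algebraic decomposition \eqref{decom}, Young's inequality $ab^{(2-p)/2}\le \tfrac{p}{2}\epsilon^{(p-2)/p}a^{2/p}+\tfrac{2-p}{2}\epsilon b$, integrates, applies Hölder with exponents $2/p$ and $2/(2-p)$ to get \eqref{t5}, and combines with the analogue of \eqref{t9} and one more Young's inequality, yielding the stated estimate with the extra $\big(\dashint_{\Omega_{2r}(x_1)}|Du|^p\big)^{2-p}$ factor and constant $C'$.

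\textbf{Main obstacle.} The only genuinely nontrivial point — and the reason the statement requires $\Omega$ to be Reifenberg flat at all rather than being a pure ODE-type estimate — is the Sobolev–Poincaré step and the comparability of averages over $\Omega_{2r}(x_1)$ versus $B_{2r}(x_1)$. I expect the hard part to be nothing more than verifying that, with $u$ and $w$ extended by zero appropriately, $u-w\in W_0^{1,p}(B_{2r}(x_1))$ (so that the \emph{full-ball} Sobolev inequality with the optimal exponent $np/(n-p)$ is available with a constant depending only on $n,p$, not on $\Omega$), and that $|\Omega_{2r}(x_1)|\asymp |B_{2r}(x_1)|$ so that one may freely interchange $\dashint_{\Omega_{2r}(x_1)}$ and $\dashint_{B_{2r}(x_1)}$ at the cost of a dimensional constant; both follow from the $(\delta,R_0)$-Reifenberg condition once $\delta$ is fixed small (independently of the other parameters). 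Everything else — the monotonicity inequality \eqref{mono}, the function $\Phi$, the algebraic manipulations, Young's inequality — is literally the interior argument and needs no change. This is precisely why the paper states ``the proof is similar to that of Lemma \ref{111120149} and hence is omitted.''
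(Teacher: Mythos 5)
Your plan is correct and is exactly the route the paper intends: repeat the interior argument of Lemma~\ref{111120149}, with $B_{2r}$ replaced by $\Omega_{2r}(x_1)$ as the domain of the test function, and with the zero extension of $u-w$ to $B_{2r}(x_1)$ supplying the full-ball Sobolev inequality with a constant depending only on $n,p$. One small remark on the ``main obstacle'': the measure comparability $|\Omega_{2r}(x_1)|\asymp|B_{2r}(x_1)|$ is in fact not needed here. If you divide every estimate by $|B_{2r}(x_1)|$ rather than by $|\Omega_{2r}(x_1)|$, then each H\"older/Sobolev step produces quantities of the form $\frac{1}{|B_{2r}(x_1)|}\int_{\Omega_{2r}(x_1)}(\,\cdot\,)$, which equal $\dashint_{B_{2r}(x_1)}(\,\cdot\,)$ for the extended functions $D(u-w)$, $u-w$, $Du$ (zero outside $\Omega$) and are bounded above by $\dashint_{B_{2r}(x_1)}(\,\cdot\,)$ for the data terms $|f-m|^{p'}$ and $|g|^{np/(np-n+p)}$, since the integrands are nonnegative and $\Omega_{2r}(x_1)\subset B_{2r}(x_1)$. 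In particular the last factor comes out directly as $(\dashint_{B_{2r}(x_1)}|Du|^p)^{2-p}$, not $(\dashint_{\Omega_{2r}(x_1)}|Du|^p)^{2-p}$, so no Reifenberg input is required at this stage; the flatness is only genuinely used later, in Lemma~\ref{Gehring bdry}. The rest — the identity from subtracting weak formulations, the disappearance of the constant $m$ via $u-w\in W_0^{1,p}(\Omega_{2r}(x_1))$, the monotonicity \eqref{mono}, the decomposition \eqref{decom} and Young/H\"older steps — carries over verbatim, as you say.
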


\begin{rem} \label{rem2}
As in Remark \ref{rem1} we can choose $C' = 0$ when $p \in [2,n)$.
\end{rem}

\subsection{Proof of Theorem \ref{boundary}}

We are now in a position to tackle Theorem \ref{boundary}. 
The following technical lemma, which can be found in \cite[Lemma 3.13]{ACM} (also cf.\ \cite[Lemma 3.4]{HL}), will be in need. 
\begin{lem}\label{lem00}
	Let $\phi$ be a nonnegative and nondecreasing functions on $(0,Z]$. 
	Let $A$, $B$, $\delta$, $\beta$ be nonnegative constants with $\delta>\beta$ and $\eta \in (0,1)$.
	Suppose that
	\begin{align*}
	\phi(\rho)\leq A\left[\left(\f{\rho}{R}\right)^\delta+\eta\right]\phi(R)+BR^\beta
	\end{align*} for all $0<\rho\leq R\leq Z$.
	Let $\gamma\in [\beta,\delta)$.
	If $\eta <(2A)^{\delta/(\gamma-\delta)}$ then there exists a $C=C(\delta,\beta, \gamma,A) > 0$ such that
	\begin{align*}
	\phi(\rho)\leq C \left(\f{\rho}{R}\right)^{\gamma}\phi(R)+ C B \rho^\beta
	\end{align*}
	for all $0<\rho\leq R\leq Z$.
\end{lem}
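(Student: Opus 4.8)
The plan is to run the classical Campanato-type iteration argument (as in \cite{HL}). Fix $\gamma\in[\beta,\delta)$; I would first dispose of the case $\gamma\in(\beta,\delta)$ and then recover the endpoint $\gamma=\beta$. Set the contraction ratio $\tau:=(2A)^{1/(\gamma-\delta)}$, which lies in $(0,1)$ since $\gamma-\delta<0$ (in the degenerate range $2A\le 1$ the assumption on $\eta$ is vacuous and one simply fixes any $\tau\in(0,1)$ with $2A\tau^{\delta-\gamma}\le1$). By construction $2A\,\tau^{\delta-\gamma}=1$, i.e.\ $A\tau^\delta=\tfrac12\tau^\gamma$, while the hypothesis $\eta<(2A)^{\delta/(\gamma-\delta)}=\tau^\delta$ gives $A\eta<A\tau^\delta=\tfrac12\tau^\gamma$.

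The first step is to feed the pair $(\tau R,R)$ into the assumed inequality and absorb both the geometric term and the $\eta$-term:
\[
\phi(\tau R)\le A\bigl(\tau^\delta+\eta\bigr)\phi(R)+BR^\beta\le \tau^\gamma\phi(R)+BR^\beta\qquad(0<R\le Z).
\]
Iterating with $R$ replaced by $\tau^{k-1}R$ and summing the resulting finite geometric series, I would obtain
\[
\phi(\tau^k R)\le \tau^{k\gamma}\phi(R)+BR^\beta\sum_{j=0}^{k-1}\tau^{(k-1-j)\gamma}\tau^{j\beta}\le \tau^{k\gamma}\phi(R)+C_1\,B\,(\tau^k R)^\beta
\]
for all $k\in\N$ and $0<R\le Z$, where $C_1=\tau^{-\gamma}\bigl(\tau^{\beta-\gamma}-1\bigr)^{-1}<\infty$. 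Convergence of the sum is precisely where $\gamma>\beta$ is used: since $\tau^{\beta-\gamma}>1$ the last term dominates and $\sum_{j=0}^{k-1}\tau^{(k-1-j)\gamma+j\beta}\le C_1\tau^{k\beta}$.

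The last step is the passage from the dyadic scales $\tau^k R$ to an arbitrary $\rho$. Given $0<\rho\le R\le Z$, choose $k\in\N$ with $\tau^{k+1}R<\rho\le\tau^kR$; monotonicity of $\phi$ gives $\phi(\rho)\le\phi(\tau^kR)$, and the inequalities $\tau^k<\tau^{-1}\rho/R$ and $\tau^kR<\tau^{-1}\rho$ convert the right-hand side into $C\bigl[(\rho/R)^\gamma\phi(R)+B\rho^\beta\bigr]$ with $C=\max\{\tau^{-\gamma},C_1\tau^{-\beta}\}$, depending only on $\delta,\beta,\gamma,A$. For the endpoint $\gamma=\beta$ one picks $\gamma'\in(\beta,\delta)$ close enough to $\beta$ that $\eta<(2A)^{\delta/(\gamma'-\delta)}$ still holds, applies the case already settled with exponent $\gamma'$, and then uses $(\rho/R)^{\gamma'}\le(\rho/R)^\beta$ since $\rho\le R$.

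The computations are routine; the one point requiring care is the calibration of $\tau$ so that the geometric term $A\tau^\delta\phi(R)$ and the ``bad'' term $A\eta\phi(R)$ are each absorbed into $\tfrac12\tau^\gamma\phi(R)$ — this is exactly what forces the threshold $(2A)^{\delta/(\gamma-\delta)}$ on $\eta$, and splitting the budget evenly between the two terms is what keeps the final constant proportional to $2A$ rather than a larger multiple.
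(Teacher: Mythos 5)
Your treatment of the main case $\gamma\in(\beta,\delta)$ is correct and is the standard Campanato iteration: the calibration $\tau=(2A)^{1/(\gamma-\delta)}$ gives $A\tau^\delta=\tfrac12\tau^\gamma$, the hypothesis $\eta<\tau^\delta$ absorbs the error term as $A\eta<\tfrac12\tau^\gamma$, the geometric sum converges because $\tau^{\beta-\gamma}>1$, and the passage from dyadic scales to arbitrary $\rho$ via monotonicity of $\phi$ is fine. The paper does not prove this lemma itself (it is cited to \cite{ACM} and \cite{HL}), and your argument is the expected one.

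The genuine gap is in the endpoint $\gamma=\beta$. You propose to pick $\gamma'\in(\beta,\delta)$ with $\eta<(2A)^{\delta/(\gamma'-\delta)}$ and apply the settled case with $\gamma'$. But the threshold $(2A)^{\delta/(\gamma'-\delta)}$ is strictly decreasing in $\gamma'$ (when $2A>1$), so $\gamma'$ must be taken closer and closer to $\beta$ as $\eta$ approaches $(2A)^{\delta/(\beta-\delta)}$. As $\gamma'\downarrow\beta$ your constant $C_1=\tau^{-\gamma'}(\tau^{\beta-\gamma'}-1)^{-1}$ blows up, since $\tau^{\beta-\gamma'}\to 1$. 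Hence the final $C$ in your argument depends on $\eta$ through $\gamma'$, which contradicts the claim $C=C(\delta,\beta,\gamma,A)$. This dependence cannot be removed with the threshold as written: running the multiplicative iteration directly at $\gamma=\beta$ with $\mu:=A(\tau^\delta+\eta)$ and $\tau=(2A)^{1/(\beta-\delta)}$ gives $\phi(\tau^kR)\le\mu^k\phi(R)+BR^\beta\tau^{k\beta}/(\tau^\beta-\mu)$, and $\tau^\beta-\mu=\tfrac12\tau^\beta-A\eta\to 0$ as $\eta\to\tau^\delta=(2A)^{\delta/(\beta-\delta)}$, so the constant necessarily degenerates. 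So either the constant should be allowed to depend on the gap $(2A)^{\delta/(\beta-\delta)}-\eta$, or the explicit threshold should be strengthened (e.g.\ roughly halved) to get a uniform $C$ at the endpoint; your proof needs to state one of these modifications rather than silently introduce an $\eta$-dependence. A smaller point: your parenthetical handling of $2A\le1$ only arranges $2A\tau^{\delta-\gamma}\le1$ but not $A\eta\le\tfrac12\tau^\gamma$; when $2A<1$ one should take, say, $\tau$ with $\tau^\gamma\ge 2A$ (which is possible uniformly in $\eta<1$), while the borderline $2A=1$ suffers the same degeneration as above.
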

Now we prove Theorem \ref{boundary}.

\begin{pot}{boundary}
Let $r\leq r_1$ and denote $B_r=B_r(x_1)$.
Let $\sigma \in (0,n)$.
By Lemma \ref{Gehring bdry}, there exists a constant $C_0 = C_0(n,p,\Lambda) \ge 1$ such that
\begin{align*}
\int_{B_{\epsilon r}}|D v|^{p} \, dz\leq C_0	\epsilon^{\sigma}\int_{B_{r}}|D v|^{p} \, dz 
\end{align*}
for all $\epsilon\in (0,1/800)$. 
Therefore 
\begin{align}\nonumber
\int_{B_{\epsilon r}} |D u|^{p} \, dz
&\leq 4^p \left( \int_{B_{\epsilon r}}|D v|^{p} \, dz+ \int_{B_{\epsilon r}}|D v-D w|^{p} \, dz
+ \int_{B_{\epsilon r}}|Dw-Du|^{p} \, dz \right)
\nonumber
\\ 
& \nonumber\leq 4^p \left( C_0 \epsilon^{\sigma}\int_{B_{ r}}|Dv|^{p} \, dz
+ \int_{B_{r}}|Dv-D w|^{p} \, dz
+ \int_{B_{r}}|Dw-Du|^{p} \, dz \right)
\\ & \leq 2 C_0 4^p \left( \epsilon^{\sigma} \int_{B_{r}}|D u|^{p} \, dz
+ \int_{B_{r}}|Dv-D w|^{p} \, dz
+ \int_{B_{r}}|Dw-Du|^{p} \, dz \right)
\nonumber
\\ 
& \leq 2 C_0 4^p \left( \big[ \epsilon^{\sigma}+\omega(r)^{\tau_0 p} \big] \int_{B_{r}}|D u|^{p} \, dz
+ \int_{B_{r}}|D w|^{p} \, dz
+ \int_{B_{\epsilon r}}|Dw-Du|^{p} \, dz \right)
\nonumber
\\
&\le 2 C_0 4^p \left( \big[ \epsilon^{\sigma}+\omega(r)^{\tau_0 p} \big] \int_{B_{ r}}|D u|^{p} \, dz
+ \int_{B_{r}}|Dw-Du|^{p} \, dz \right)
\label{es10}
\end{align}
for all $\epsilon\in (0,1/800)$, where we used the triangle inequality in the first, third and fifth steps as well as \eqref{diff bd} in the fourth step.

Next we will use Lemma \ref{111120149+} to bound the second term on the right-hand side of \eqref{es10}.
Accordingly there exist constants $C_1 = C_1(n,p) > 0$ and $C_2 = C_2(n,p) \ge 0$ such that 
\begin{align*}
&\int_{B_{\epsilon r}}|Dw-Du|^{p} \, dz
\\
& \leq C_1 \left[\tp_{B_{r}}|f-m|^{p'} \, dz + \left(\tp_{B_{r}}|g|^{\frac{np}{np-n+p}} \, dz\right)^{\frac{np-n+p}{np-n}} \right]
\\
& \quad  +  C_2 \left[\left(\tp_{B_{r}}|f-m|^{p'} \, dz\right)^{p-1} + \left(\tp_{B_{r}}|g|^{\frac{np}{np-n+p}} \, dz\right)^{\frac{np-n+p}{n}} \right] \left(\tp_{B_{r}}|D u|^{p} \, dz\right)^{2-p}\\
& \leq \eta \int_{B_{ r}}|D u|^{p} \, dz
\\
& \quad +  \left( C_1+(p-1)C_2 \left(\frac{\eta}{(C_2+1)(2-p)}\right)^{\frac{p-2}{p-1}} \right)  \left[\tp_{B_{r}}|f-m|^{p'} \, dz + \left(\tp_{B_{r}}|g|^{\frac{np}{np-n+p}} \, dz\right)^{\frac{np-n+p}{np-n}} \right]
\end{align*}
for all $\eta > 0$ and $m \in \R^n$, where we used Young's inequality in the last step. 
As mentioned in Remark \ref{rem2} we may take $C_2 = 0$ when $p \in [2,n)$.

Consequently we obtain
\begin{align*}
\int_{B_{\epsilon r}} |D u|^{p} \, dz
&\le 2 C_0 4^p \big[ \epsilon^{\sigma}+\omega(r)^{\tau_0 p} + \eta \big]\int_{B_{ r}}|D u|^{p} \, dz
\\
& \quad + 2 C_0 4^p \left( C_1+(p-1)C_2 \left(\frac{\eta}{(C_2+1)(2-p)}\right)^{\frac{p-2}{p-1}} \right)  
\\
& \quad \quad \times
\left[\tp_{B_{r}}|f-m|^{p'} \, dz + \left(\tp_{B_{r}}|g|^{\frac{np}{np-n+p}} \, dz\right)^{\frac{np-n+p}{np-n}} \right]
\end{align*}
for all $\epsilon \in (0,1/800)$, $\eta > 0$ and $m \in \R^n$.

By appropriately scaling the constants, we may choose $C = C(n,p,\Lambda) > 0$ and $C_\eta = C_\eta(n,p,\Lambda,\eta) > 0$ such that the above estimate also holds for all $\epsilon\in (0,2)$.
Consequently,
\begin{align*}
\int_{B_{\rho}(x_1)}|D u|^{p} \, dz
& \leq C	\left[ \left(\frac{\rho}{R}\right)^{\sigma} + \omega(R)^{\tau_0 p} +\eta \right] \, \int_{B_{R}(x_1)}|D u|^{p} \, dz\\
&\quad + C_\eta \, R^{\sigma}r_1^{n-\sigma}\int_{0}^{2 {\rm diam(\Omega)}}\left[\tb_{B_{\rho}(x)}|f-m|^{p'} \, dz + \rho^{p'}\left(\tb_{B_{\rho}(x)}|g|^{\frac{np}{np-n+p}} \, dz\right)^{\frac{np-n+p}{np-n}} \right]\frac{d\rho}{\rho},
\end{align*}
for all $0<\rho\leq R\leq 2r_1$, $\eta > 0$ and $m \in \R^n$.

Now Lemma \ref{lem00} with
$$
\phi(r) :=\int_{B_{r}(x_1)}|D u|^{p} \, dz
\quad \mbox{with } r\in(0,2r_1),
$$ 
gives
\begin{align*}
\int_{B_{\rho}(x_1)}|D u|^{p} \, dz
& \leq C \, \left(\frac{\rho}{R}\right)^{\sigma}\int_{B_{R}(x_1)}|D u|^{p} \, dz\\
&\quad +  C \, \rho^{\sigma}r_1^{n-\sigma} \int_{0}^{2 {\rm diam(\Omega)}}\left[\tb_{B_{\rho}(x)}|f-m|^{p'} \, dz + \rho^{p'}\left(\tb_{B_{\rho}(x)}|g|^{\frac{np}{np-n+p}} \, dz\right)^{\frac{np-n+p}{np-n}} \right]\frac{d\rho}{\rho},
\end{align*}
provided that $r_1$ and $\eta$ are sufficiently small. 
Hereafter $C = C(n,p,\Lambda) > 0$.

Next let $x_0 \in \Omega$.
A specific choice of $R=2r_1$ and $\rho=2 d(x_0)$ in the last display leads to 
\begin{align*}
&\tb_{B_{2d(x_0)}(x_1)}  |D u|^{p} \, dz\\
& \leq C\left(\frac{r_1}{d(x_0)}\right)^{n-\sigma}
\\
& \quad \times \left(\tb_{B_{ 2 r_1}(x_1)}|D u|^{p} \, dz +  \int_{0}^{2 {\rm diam(\Omega)}}\Bigg[\tb_{B_{\rho}(x)}|f-m|^{p'} \, dz + \rho^{p'}\Bigg(\tb_{B_{\rho}(x)}|g|^{\frac{np}{np-n+p}} \, dz\Bigg)^{\frac{np-n+p}{np-n}} \Bigg]\frac{d\rho}{\rho} \right).
\end{align*}

This in turn implies 
\begin{align*}
&\tb_{B_{d(x_0)}(x_0)}  |D u|^{p} \, dz\\
& \leq C\left(\frac{r_1}{d(x_0)}\right)^{n-\sigma}
\\
& \quad \times \left(\tb_{B_{ 2 r_1}(x_1)}|D u|^{p} \, dz +  \int_{0}^{2 {\rm diam(\Omega)}}\Bigg[\tb_{B_{\rho}(x)}|f-m|^{p'} \, dz + \rho^{p'}\Bigg(\tb_{B_{\rho}(x)}|g|^{\frac{np}{np-n+p}} \, dz\Bigg)^{\frac{np-n+p}{np-n}} \Bigg]\frac{d\rho}{\rho} \right)\\
& \leq \frac{C}{d(x_0)^{n-\sigma}} \int_{0}^{2 {\rm diam(\Omega)}}\left[\tb_{B_{\rho}(x)}|f-m|^{p'} \, dz + \rho^{p'}\left(\tb_{B_{\rho}(x)}|g|^{\frac{np}{np-n+p}} \, dz\right)^{\frac{np-n+p}{np-n}} \right]\frac{d\rho}{\rho},
\end{align*}
where we used \eqref{int0} and the fact that $r_1 < 1$ in the second step.

Now \eqref{ine3} follows directly from this last estimate and \eqref{Wwithtail}, which completes our proof.
\end{pot} 


\end{document}